\documentclass[reqno]{amsart}
\usepackage{graphicx, wrapfig, amssymb, cite}
\usepackage[colorlinks,plainpages,citecolor=magenta, linkcolor=blue, bookmarksnumbered]{hyperref}

\newtheorem{thm}{Theorem}

\newtheorem{lem}{Lemma}

\theoremstyle{definition}

\theoremstyle{remark}

\numberwithin{equation}{section}
\newcommand{\dmn}{\mathop{\rm dom}}
\newcommand{\rank}{\mathop{\rm rank}}
\newcommand{\spn}{\mathop{\rm span}}
\newcommand{\Ran}{\mathop{\rm Ran}}
\newcommand{\graph}{\mathop{\rm graph}}

\renewcommand{\kappa}{\varkappa}

\newcommand{\Real}{\mathbb R}

\newcommand{\Comp}{\mathbb C}
\newcommand{\eps}{\varepsilon}

\newcommand{\cA}{\mathcal{A}}
\newcommand{\cE}{\mathcal{E}}

\newcommand{\cJ}{\mathcal{J}}
\newcommand{\cK}{\mathcal{K}}
\newcommand{\cN}{\mathcal{N}}
\newcommand{\cP}{\mathcal{P}}

\newcommand{\cT}{\mathcal{T}}

\newcommand{\cQ}{\mathcal{Q}}

\newcommand{\cX}{\mathcal{X}}

\newcommand{\ve}{v^\eps}
\newcommand{\lme}{\lambda^\eps}

\newcommand{\rRz}{\mathrm{R}_z}
\newcommand{\rR}{R_\mu}
\begin{document}

\title[Spectral Problems on Open-Book Structures]{Spectral Problems on Open-Book Structures\\ with Singularly Perturbed Density:\\ The Limit Operator}%
\author{Yuriy Golovaty} \address[YG]{Faculty of Mechanics and Mathematics, Ivan Franko National University of Lviv,
	1 Universytetska st., 79602 Lviv \and Faculty of Applied Sciences, Ukrainian Catholic University, 2a Kozelnytska str., 79026, Lviv, Ukraine}%
\email{yuriy.golovaty@lnu.edu.ua \and yuriy.golovaty@ucu.edu.ua}
\author{Delfina G\'omez} \address[DG]{Departamento Matem\'aticas, Estad\'istica y Computaci\'on, Universidad de Cantabria, Av.~Los Castros, Santander, 39005, Spain}
\email{gomezdel@unican.es}
\author{Maria-Eugenia P\'erez-Mart\'{\i}nez} \address[MP]{Departamento Matem\'atica Aplicada y Ciencias de la Computaci\'on, Universidad de Cantabria, Av.~Los Castros, Santander, 39005, Spain}
 \email{meperez@unican.es}

\subjclass{35B25, 35J25, 35P15, 58J32, 74H10}%

\keywords{Stratified manifold, open-book structure, block operator matrix, non-self-adjoint operator, concentrated masses, singular perturbation}%
\maketitle

\begin{abstract}
We investigate the spectral problem arising in the asymptotic analysis of vibrations of open-book structures with a mass density perturbed near
the binding. The limiting problem is go\-ver\-ned by a non-self-adjoint block operator matrix coupling the macroscopic and microscopic components of the model.  We describe the spectrum of this operator and completely characterize its eigenspaces and root subspaces. We further prove that generalized eigenvectors form chains of length at most two and derive an explicit criterion for the existence and number of Jordan blocks. This model provides a nontrivial example of a family of self-adjoint operators acting in varying Hilbert spaces whose limiting spectral behavior is described by a non-self-adjoint operator with a genuine Jordan structure.
\end{abstract}

\section{Introduction}

Boundary value problems for partial differential equations on stratified manifolds arise naturally in mathematical models of wave propagation, diffusion, elasticity,
quantum transport, and other physical processes. Such manifolds consist of components of different dimensions joined along lower-dimensional interfaces.
Their mathematical theory has undergone substantial development over the past decades, including questions of well-posedness, regularity, asymptotic analysis
near singularities, and spectral properties; see, for example, \cite{Nicaise1988, DaugeNlcalse1989, LagneseLeugering1993, NicaiseSandig1994, NicaiseVonBelow1996, NicaiseSandig1999, NicaisePenkin2004}. An important class of stratified manifolds is formed by open-book structures, consisting of a finite collection of surfaces joined along a common curve (called the \emph{pages} and the \emph{binding}, respectively) \cite{CorbinKuchment2020, Corbin2020, AkdumanKuchment2024}. Figure~\ref{FigOpenBooks}\textit{(a)} illustrates the classical case of an open binding. The binding may also be a closed curve, as shown in Fig.~\ref{FigOpenBooks}\textit{(b), (c)}. In this case, the pages may have different global topology (for example, they may be diffeomorphic to disks or
cylinders). Nevertheless, every such stratified manifold has the same local structure in a neighborhood of the binding, namely that of an open book. Boundary value problems on open-book structures may be regarded as natural higher-dimensional counterparts of differential equations on metric graphs, commonly known as quantum graphs \cite{BerkolaikoKuchmentBook}. 

\begin{figure}[t]
\includegraphics[scale=.35]{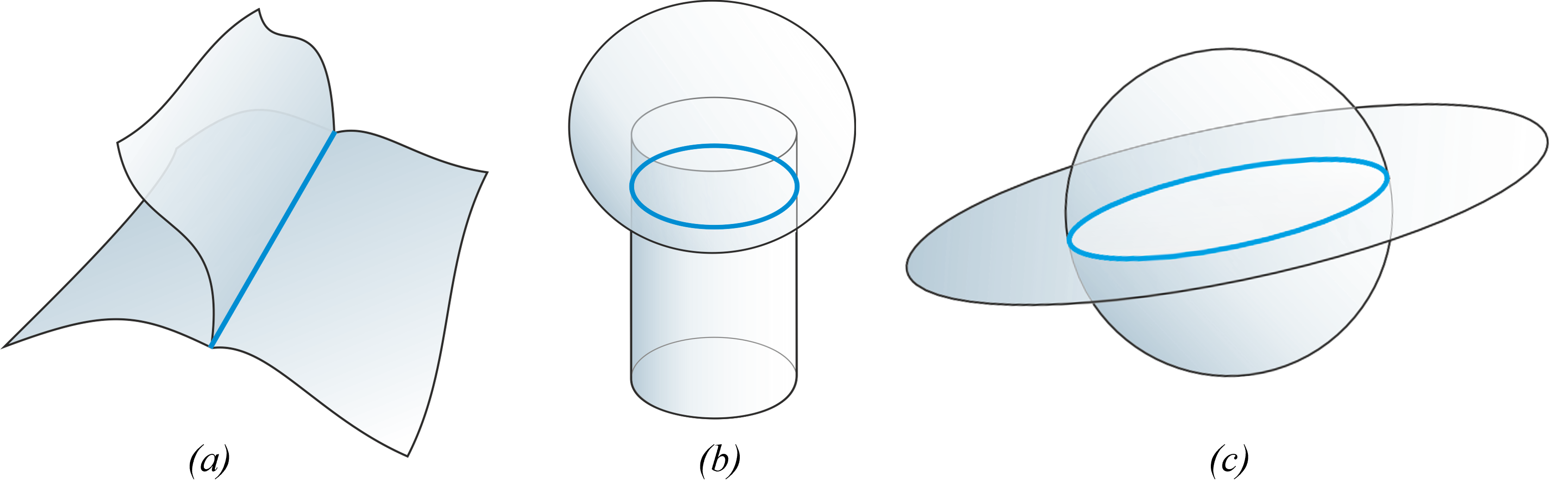}
\caption{Examples of open-book structures. The binding is highlighted in blue.}
\label{FigOpenBooks}
\end{figure}

We consider a spectral problem combining two sources of complexity: the geo\-metry of an open-book structure and a singular perturbation of the coefficients in the
governing differential equation, concentrated in a small neighborhood of the binding. This model is motivated by the study of vibrations of elastic structures
with complex geometry and highly heterogeneous mass distributions, where a substantial portion of the total mass is concentrated near the junctions.
Problems involving localized mass concentration have been extensively studied in a variety of settings, including mass concentration near points \cite{SHSPbook, OleinikBook, SPTchatat, Tchatat, GolNazOleinikSob1988, GolNazOleinikS1988, GoLoPe1999, GoLoPe-placa}, curves \cite{GoGoLoPe2, GoNaPe1, GoNaPe2, Golovaty2022AS}, and planes \cite{GoLoPemasaplano}.  A comprehensive survey of this area and the corresponding asymptotic results can be found in \cite{LoPe-review}.

A characteristic feature of vibrating systems with concentrated masses is that the asymptotic behavior of the spectrum of such systems depends critically on the strength of the mass perturbation. For weak perturbations, the leading-order spectral asymptotics are determined by the global geometry of the vibrating structure. In contrast, when the perturbation is sufficiently strong, the spectrum is governed primarily by the geometry of the shrinking neighborhood in which the concentrated mass is supported. Between these two regimes lies a critical scaling, where both mechanisms contribute simultaneously.

The critical scaling is characterized by the simultaneous influence of the macroscopic geometry of the vibrating structure and the microscopic geometry of the region where the mass perturbation is localized. As a consequence, the limit spectral problem is no longer governed by a single differential operator but takes the form of a coupled spectral system associated with a block operator matrix whose entries act on different geometric objects. Although each perturbed problem is associated with a self-adjoint operator in a weighted Hilbert space, the limit operator is, in general, non-self-adjoint. This phenomenon originates from the singular dependence of the Hilbert space metric on the perturbation parameter. As distinct eigenvalues coalesce, the corresponding eigenfunctions remain orthogonal in the weighted spaces, 
whereas, considering them in the standard $L_2$-metric, the angle between them tends to zero.

Nevertheless, the corresponding invariant subspaces converge to the root subspaces of the limit operator, giving rise to a nontrivial Jordan structure. Such spectral systems have previously been derived for strings with an attached mass concentrated in a neighborhood of a point and for membranes with mass concentrated near a curve. The corresponding block operator formulations were justified by norm resolvent convergence in the former case \cite{Golovaty2020M2} and by establishing the Hausdorff convergence of the spectra in the latter \cite{Golovaty2022AS}.

This work continues the study initiated in \cite{GGP-JMAA2025}, where an analogous spectral problem on an open-book structure with a subcritical density
perturbation was investigated and  asymptotic expansions of the eigenvalues and eigenspaces were established. In contrast to the subcritical case, where the limit operators are self-adjoint with compact resolvent, the critical regime gives rise to a non-self-adjoint operator with a considerably richer spectral structure. For this reason, the present paper is devoted exclusively to the analysis of its spectrum, eigenspaces, and root subspaces. The justification of the limit model, including the convergence of the perturbed operators and their spectra, will be the subject of a subsequent paper.

The paper is organized as follows. Section~\ref{Section2}  introduces the geometric framework and the perturbed problem with a high mass density near the binding.
Section~\ref{Section3} derives the limit spectral problem and shows that its operator realization is given by a block operator matrix. Section~\ref{Section4} establishes the spectral properties of the limit operator. Section~\ref{Section5} is devoted to the characterization of its eigenspaces, root subspaces, and Jordan structure. Finally, Section~\ref{Section6} presents an explicit model example illustrating the main results.

\section{Statement of the problem}\label{Section2}
Let $\{\Omega_k\}_{k=1}^K$ be a collection of compact two-dimensional Riemannian manifolds with Lipschitz boundaries. We assume that these manifolds are smoothly embedded into $\Real^3$ in such a way that they share a common part of their boundaries: there exists a smooth curve $\Gamma$ contained in each $\partial\Omega_k$, and any two manifolds may intersect only along $\Gamma$. Such a configuration is a particular example of a topological structure known as an ``open book'' \cite{CorbinKuchment2020}. The connecting curve  may be either open or closed. In the latter case, the pages $\Omega_k$ resemble cylinders or hemispheres (see Fig.~\ref{FigOpenBooks}).

\begin{figure}[!h]
  \centering
  \includegraphics[scale=0.28]{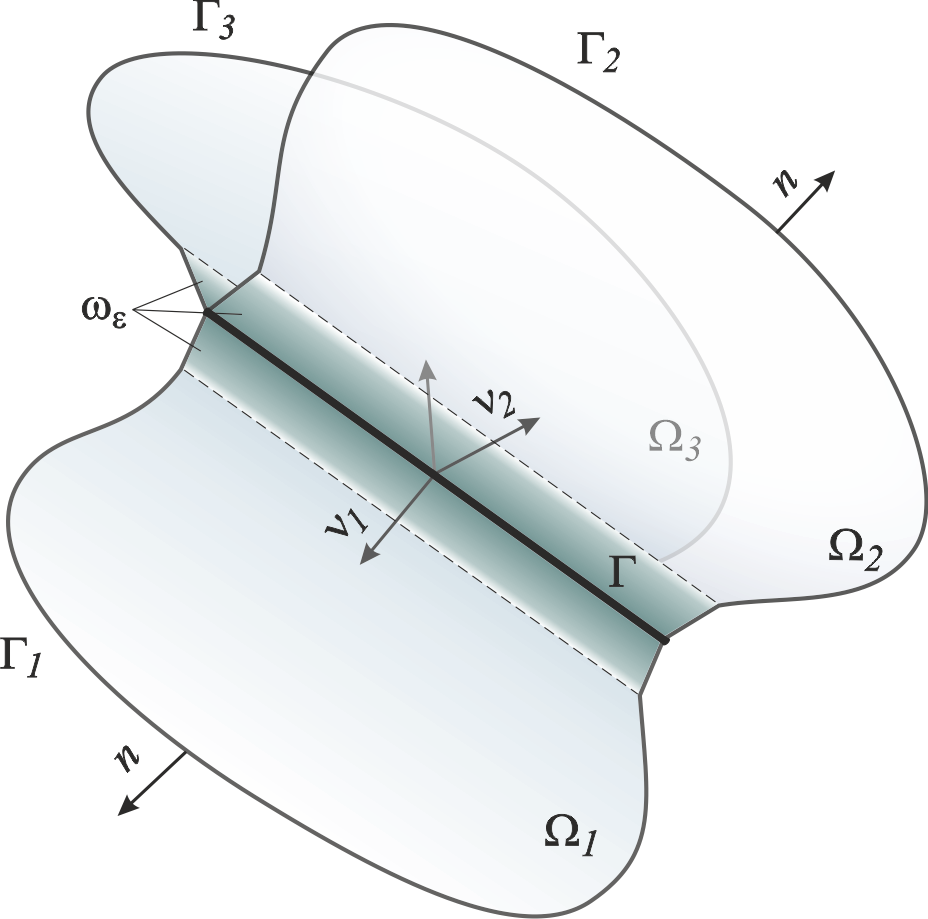}\\
  \caption{The open book structure $\Omega$: the pages $\Omega_k$ are connected along the common binding $\Gamma$, while $\nu_k$ denotes the inward normal vector field to $\Gamma$ regarded as a part of $\partial\Omega_k$.}\label{FigSS}
\end{figure}

We introduce some notation used throughout the paper. Set  
$$
   \Omega=\Omega_1\cup\ldots\cup \Omega_K\cup \Gamma, \qquad \Gamma_k=\partial\Omega_k\setminus\Gamma,
$$ 
so that $\partial \Omega=\bigcup_{k=1}^K \Gamma_k$ is the boundary of the open book structure. Let $n$ be the unit outward normal vector field to $\partial\Omega$, and let $\nu_k$ be the unit inward  normal vector field to $\Gamma$, regarded as a part of $\partial\Omega_k$ (see Fig.~\ref{FigSS}).

A function $\phi$ on $\Omega$ is understood as a vector-valued function
\begin{equation*}
  \phi=(\phi_1,\phi_2,\dots,\phi_K), \qquad \phi_k\colon \Omega_k\to \Comp.
\end{equation*}
Since the traces $\phi_k|_\Gamma$ generally do not coincide, we do not assign pointwise values to $\phi$ on $\Gamma$. Instead, we write
\begin{equation*}
  \phi|_\Gamma=(\phi_1|_\Gamma,\phi_2|_\Gamma,\dots,\phi_K|_\Gamma)
\end{equation*}
for the vector of traces on $\Gamma$.

We study the spectral problem
\begin{align}\label{PertPrblEq}
 -&\Delta_\Omega \ve+b\ve=\lme\rho^\eps \ve \quad\text{in }\Omega\setminus\Gamma,\\\label{PertPrblNC}
 &\ell\ve=0\quad\text{on } \partial\Omega,\\\label{PertPrblCont}
 &\ve_1=\ve_2=\cdots=\ve_K\quad\text{on } \Gamma,\\\label{PertPrblKirh}
 &\partial_{\nu_1}\ve_1+\partial_{\nu_2}\ve_2+\cdots+\partial_{\nu_K}\ve_K=0 \quad\text{on } \Gamma
\end{align}
with a heterogenous  mass density $\rho^\eps$, depending on a small positive parameter $\eps$.  The operator $\Delta_\Omega$ acts as the Laplace-Beltrami operator $\Delta_{\Omega_k}$ on each manifold  $\Omega_k$, and the potential $b$ is a real-valued function from  $L^\infty(\Omega)$. The notation $\ell \phi=0$ stands for the collection of boundary conditions 
\begin{equation*}
  \big\{\ell_k \phi_k=0\;\;\text{on } \Gamma_k\big\}_{k=1}^K,
\end{equation*}
where each condition $\ell_k v=0$ is of Dirichlet, Neumann, or Robin type on $\Gamma_k$. Moreover, different boundary conditions may be imposed on different parts of $\partial\Omega$. 
In \eqref{PertPrblKirh}, $\partial_{\nu_k}$ denotes the inward normal derivative to $\Gamma$ on the $k$th page (see Fig.~\ref{FigSS}).

Problem \eqref{PertPrblEq}--\eqref{PertPrblKirh} can be regarded as the system
\begin{equation*}
  -\Delta_{\Omega_k}\ve_k+b_k\ve_k=\lme\rho^\eps_k\ve_k   \quad\text{in }\Omega_k,\qquad k=1,\dots,K,
\end{equation*}
where each function $\ve_k$ satisfies the boundary conditions \eqref{PertPrblNC} on the outer boundary together with the transmission
conditions \eqref{PertPrblCont}, \eqref{PertPrblKirh} on the common binding~$\Gamma$.
The latter are not imposed ad hoc. They arise as the effective coupling conditions in the asymptotic analysis of  fattened three-dimensional open-book structures as their thickness tends to zero; see \cite{CorbinKuchment2020,Corbin2020,AkdumanKuchment2024}.
Condition \eqref{PertPrblCont} ensures continuity of the displacement across the binding, whereas condition \eqref{PertPrblKirh} expresses the balance of normal forces acting on the connected membranes.

We describe the local structure of $\Omega$ near the binding and specify the dependence of the mass density $\rho^\eps$ on the small parameter $\eps$.
Let $G=(V,E)$ be a star graph with the vertex set  
$
    V=\{a_0,a_1,\dots,a_K\}
$  
and the edge set 
$$
    E=\{e_1=(a_0,a_1),\dots,e_K=(a_0,a_K)\},
$$ 
so that all edges are incident to the central vertex $a_0$. We realize $G$ as a planar metric graph by embedding it into an auxiliary plane $\Real_t^2$ endowed with the induced Euclidean metric. We assume that the  vertex $a_0$ coincides with the origin, while the  vertices $a_1,\dots,a_K$ lie on the unit circle $S^1$. The edge $e_k$ is identified with the radius connecting the origin to the vertex $a_k$.(see Fig.~\ref{FigGandLocalOBS}).

Let $\alpha\colon [0,l)\to \Real^3$ be a smooth parametrization of $\Gamma$ by the arc-length parameter $s$. We consider the trivial normal bundle
\begin{equation*}
    N(\Gamma)=\bigcup_{x\in\Gamma}N_x
\end{equation*}
associated with $\Gamma$. For each point $x=\alpha(s)\in\Gamma$, the normal space $N_x$ is the two-dimensional linear subspace of $\Real^3$ orthogonal to
the tangent vector $\dot{\alpha}(s)$.

\begin{figure}[b]
  \centering
  \includegraphics[scale=0.45]{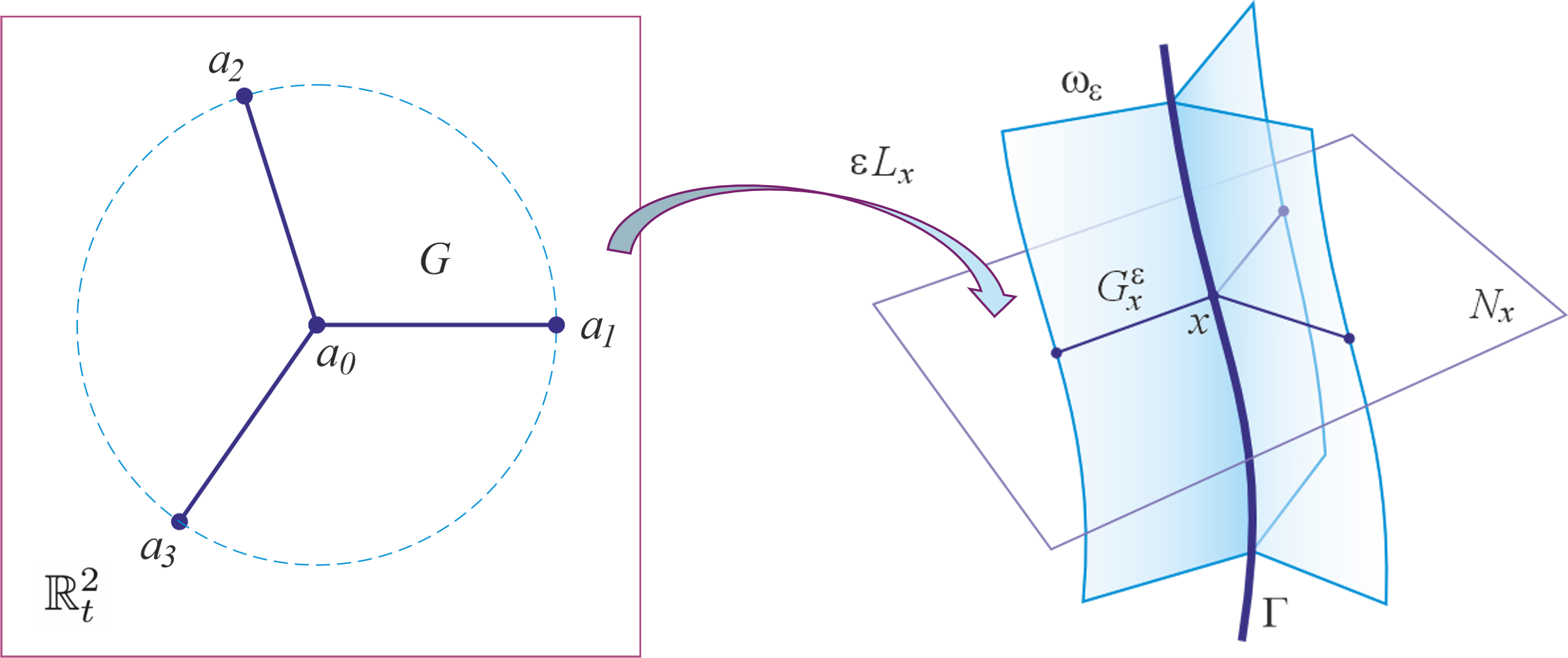}\\
  \caption{Local geometry near the binding $\Gamma$: the auxiliary star graph $G$ and its scaled image $G_x^\eps$ in the normal plane $N_x$.}\label{FigGandLocalOBS}
\end{figure}

Let $i_x\colon\Real_t^2\to N_x$ be the linear isometry sending the standard
basis of $\Real_t^2$ to the normal and binormal vectors of the Frenet--Serret
frame at $x$. Define $G_x^\eps=\eps\,i_xG$, that is, the scaled image of $G$ in the normal plane $N_x$
(see Fig.~\ref{FigGandLocalOBS}). The intersection of
$\Omega$ with the $\eps$-neighbourhood of $\Gamma$ is
\begin{equation*}
\omega^\eps=\bigcup_{x\in\Gamma}G_x^\eps.
\end{equation*}
Hence, $\omega^\eps$ is an open-book structure of thickness $O(\eps)$. For sufficiently small $\eps$, the page $\omega_k^\eps=\omega^\eps\cap\Omega_k$ is a smooth manifold. On $\omega_k^\eps$ we introduce local coordinates $(s,r_k)$, where $s$ is the arc-length parameter along $\Gamma$ and $r_k$ denotes the distance to $\Gamma$ measured within $\Omega_k$. If the curve $\Gamma$ is open, we identify the parameter domain for $s$ with the interval $[0,l]$. In the closed case, we identify $s$ with the coordinate on the circle of length $l$. Then $\omega^\eps$ is homeomorphic to the product $\Gamma\times G^\eps$.
We also introduce the notation
\begin{equation*}
\omega=\Gamma\times G,
\qquad \gamma_k=\Gamma\times\{a_k\},\qquad
\gamma=\Gamma\times\partial G=\gamma_1\cup\dots\cup\gamma_K,\qquad
\end{equation*}
where $\partial G=\{a_1,\dots,a_K\}$.

We are now in a position to specify the dependence of the mass density $\rho^\eps$ on the small parameter $\eps$. Let $\rho\colon\Omega\to\Real$ and $q\colon\omega\to\Real$ be positive functions from $L^\infty$. We assume that the perturbation of the mass density is independent of the longitudinal variable $s$ and varies only in the directions normal to the binding $\Gamma$. More precisely,
\begin{equation*}
q_k^\eps(x)=q_k(r_k/\eps), \qquad x\in\omega_k^\eps.
\end{equation*}
This assumption is essential. Allowing $q$ to depend on $s$ leads to a different limit spectral problem and requires different asymptotic techniques; see, for example, \cite{Nazarov2002, FriedlanderSolomyak2009}. We do not consider this case here.

The perturbed density is defined by
\begin{equation*}
  \rho^\eps=
  \begin{cases}
   \phantom{\eps^{-m}} \rho &\text{in } \Omega\setminus \omega^\eps,\\
    \eps^{-2}q^\eps &\text{in } \omega^\eps.
  \end{cases}
\end{equation*}
Thus, a substantial part of the total mass is concentrated in a neighbourhood of the binding. Such heterogeneous mass distributions naturally arise in models of vibrating mechanical structures with complex geometry, for instance due to welding or other metal joining techniques (see Fig.~\ref{FigWelding}).

\begin{figure}[b]
  \centering
  \includegraphics[scale=0.45]{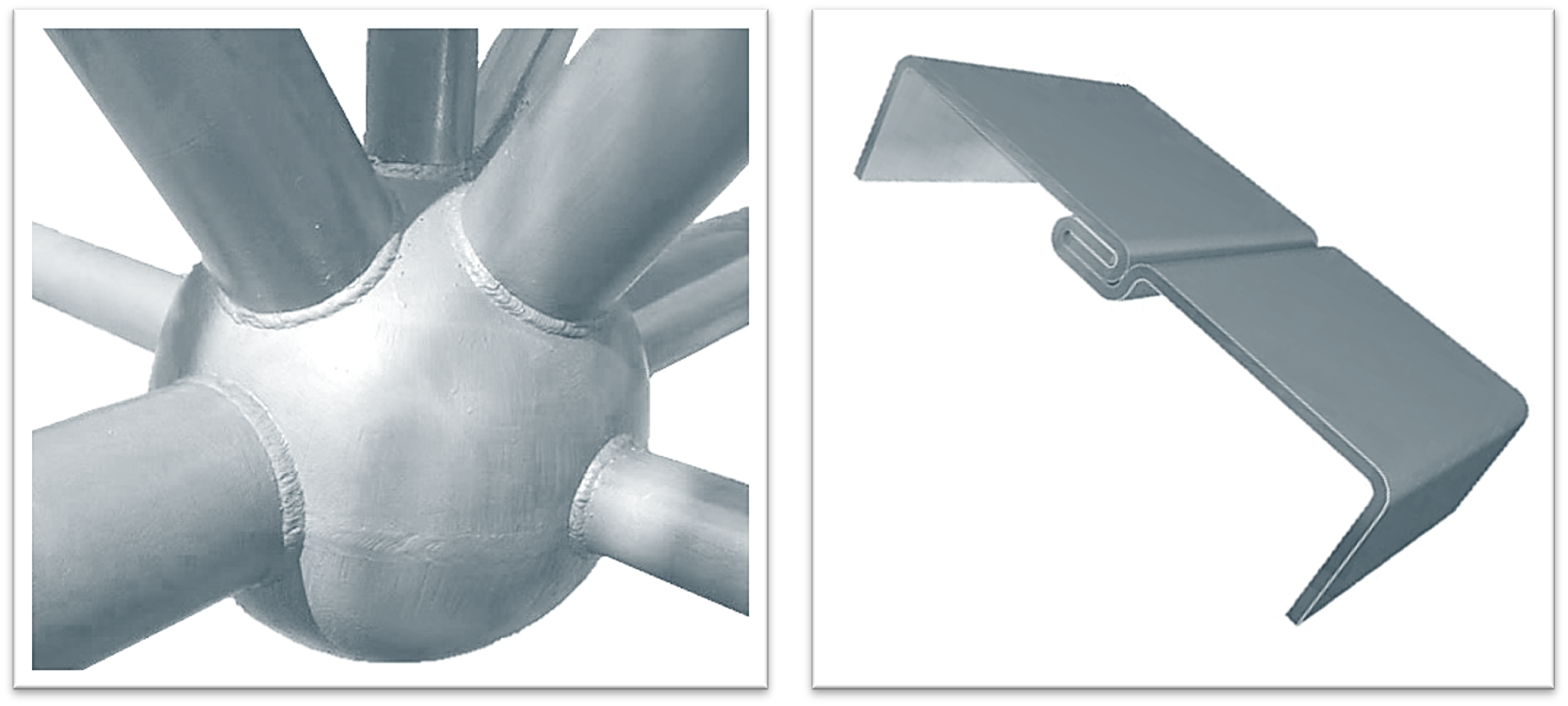}\\
  \caption{Examples of techniques for joining metal leading to hete\-ro\-geneous mass concentration near junctions: welding seams and folding tabs.}\label{FigWelding}
\end{figure}

Let $L_2(\rho^\eps,\Omega)$ be the weighted space of square-integrable functions equipped with the norm
\begin{equation*}
 \|\phi\|_\eps=\left(\int_\Omega \rho^\eps|\phi|^2\,d\mu\right)^{1/2},
\end{equation*}
where $d\mu$ denotes the Lebesgue measure on $\Omega$.  We say that a function $\phi$ belongs to the space $X(\Omega)$ if $\phi_k\in X(\Omega_k)$ for all $k=1,\dots,K$. Thus,
$
  X(\Omega)=\bigoplus_{k=1}^{K}X(\Omega_k),
$
and the corresponding norm is defined by
$$
\|\phi\|_{X(\Omega)}=\sum_{k=1}^{K}\|\phi_k\|_{X(\Omega_k)}.
$$
Throughout the paper, $W_2^j(\Omega)$ denotes the Sobolev space consisting of functions from $L_2(\Omega)$ whose weak derivatives up to order $j$ also belong to $L_2(\Omega)$.

The spectral problem \eqref{PertPrblEq}--\eqref{PertPrblKirh} is associated with the  operator
\begin{equation*}
  A_\eps=(\rho^\eps)^{-1}\big(-\Delta_\Omega+b\big)
\end{equation*}
acting in $L_2(\rho^\eps,\Omega)$ on the domain
\begin{multline*}
    \dmn A_\eps=\big\{\phi\in W_2^2(\Omega)\colon \ell \phi=0 \text{ on } \partial\Omega,\\ \phi_1=\phi_2=\cdots=\phi_K\text{ and } \textstyle\sum_{k=1}^K\partial_{\nu_k}\phi_k=0 \text{ on } \Gamma \big\}.
\end{multline*}
As shown in \cite{GGP-JMAA2025}, the operator $A_\eps$ is self-adjoint, bounded from below, and possesses compact resolvent. It follows that the spectrum of $A_\eps$ is real and discrete, and all eigenvalues have finite multiplicity.

\section{Limit spectral problem}\label{Section3}

We introduce the stretched coordinates $(s,t_k)$ on the manifold $\omega_k=\Gamma\times e_k$ in analogy with the local coordinates $(s,r_k)$ previously defined on $\omega_k^\eps$. Here 
$$
    t_k=\eps^{-1}r_k
$$
is the rescaled transverse variable, which coincides with the natural parameter along the edge $e_k$ of the graph $G$.
We  say that $\omega$ is equipped with the coordinates $(s,t)$, meaning that each page $\omega_k$ carries its own coordinate system $(s,t_k)$  (see Fig.~\ref{FigDecomposition}). Accordingly, for a function $f$ defined on $\omega$, we write
\begin{equation*}
  f(s,t)=(f_1(s,t_1),f_2(s,t_2),\dots, f_K(s,t_K)).
\end{equation*}

We seek asymptotic approximations, as $\eps\to0$, for the eigenvalues $\lambda^\eps$ of $\cA_\eps$ and the corresponding eigenfunctions $\ve$. Assume that
\begin{equation*}
\lambda^\eps=\lambda+o(1),
\end{equation*}
and that the eigenfunctions admit different asymptotic descriptions outside and inside the shrinking neighbourhood $\omega^\eps$. Namely,
\begin{equation*}
\ve(x)=v(x)+o(1),
\qquad
x\in\Omega\setminus\omega^\eps,
\end{equation*}
whereas
\begin{equation*}
\ve(x)=u(s,\eps^{-1}r)+o(1),
\qquad
x=(s,r)\in\omega^\eps.
\end{equation*}
At least one of the limit functions, $v$ or $u$, is assumed to be nontrivial.

Since the neighbourhood $\omega^\eps$ collapses onto the curve $\Gamma$ as $\eps\to0$, the outer approximation $v$ formally satisfies the equation
\begin{equation*}
-\Delta_\Omega v+bv=\lambda\rho v \quad\text{in }\Omega\setminus\Gamma.
\end{equation*}
Moreover, $v$ inherits  boundary condition \eqref{PertPrblNC} together with certain transmission conditions along $\Gamma$. To identify these conditions, we examine equation \eqref{PertPrblEq} in the neighbourhood $\omega^\eps$ using the stretched coordinates $(s,t)$.
In these coordinates, equation \eqref{PertPrblEq} takes the form
\begin{equation*}
\eps^{-2}\bigl(\partial_t^2+\lambda^\eps q(t)\bigr)\ve +\text{lower-order terms}=0\quad\text{in }\omega,
\end{equation*}
where $\partial_t =(\partial_{t_1},\dots,\partial_{t_K})$ denotes differentiation along the edges of the graph $G$. Passing to the limit as $\eps\to0$, we conclude that the inner approximation $u$ satisfies
\begin{equation*}
\partial_t^2u+\lambda qu=0
\quad\text{in }\omega,
\end{equation*}
together with the Kirchhoff-type junction conditions on $\Gamma$:
\begin{equation*}
u_1(s,0)=\cdots=u_K(s,0),
\qquad
\partial_{t_1} u_1(s,0)+\dots+\partial_{t_K} u_K(s,0)=0.
\end{equation*}

\begin{figure}[b]
  \centering
  \includegraphics[scale=0.32]{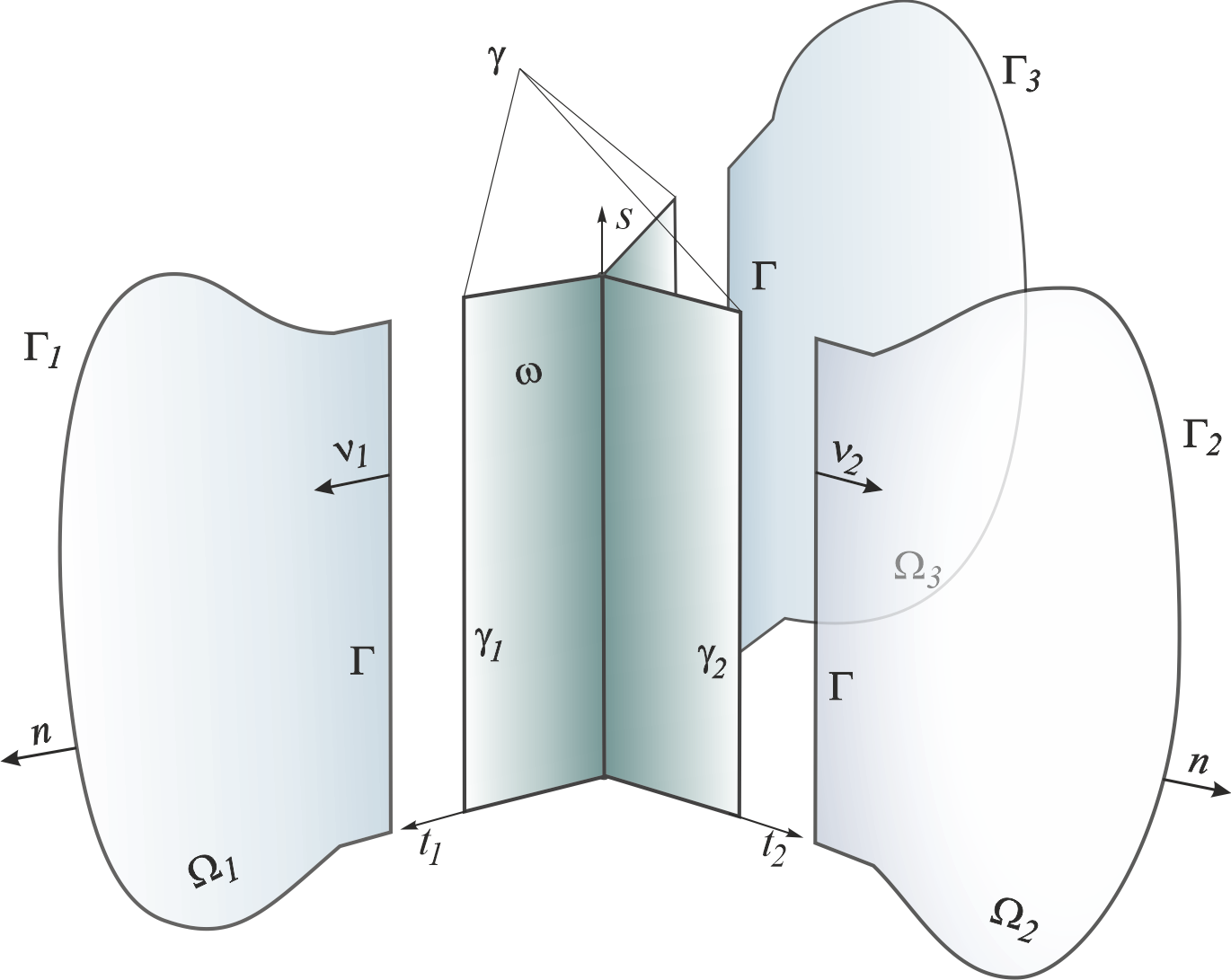}\\
  \caption{The matching interface between the outer and inner expansions. The inner domain $\omega=\Gamma\times G$ is attached to the pages
$\Omega_1,\dots,\Omega_K$ along the disconnected set $\gamma=\gamma_1\cup\dots\cup\gamma_K$.}\label{FigDecomposition}
\end{figure}

To match the inner and outer expansions, we express the outer approximation $v$ in local coordinates. Since each edge
$e_k$ has unit length, the connected component $\gamma_k$ of $\gamma$ corresponds to the cross-section $t_k=1$ in the stretched coordinates. Therefore,
\begin{equation*}
v_k(s,\eps)=u_k(s,1)+o(1),
\end{equation*}
and
\begin{equation*}
\partial_{\nu_k}v_k(s,\eps)=\eps^{-1}\partial_{t_k} u_k(s,1)+O(1)
\end{equation*}
as $\eps\to0$. 
By construction, differentiation with respect to $t_k$ coincides with differentiation along the inward normal vector $\nu_k$. Consequently, we obtain the relations
\begin{gather}\label{u=v}
v_1(s,0)=u_1(s,1),\quad v_2(s,0)=u_2(s,1),\quad \dots,\quad v_K(s,0)=u_K(s,1),
\\
\partial_{t_1}u_1(s,1)=0,\quad \partial_{t_2}u_2(s,1)=0,\quad \dots,\quad \partial_{t_K}u_K(s,1)=0.
\label{partialV}
\end{gather}
For brevity, we rewrite conditions \eqref{partialV} in the compact form
\begin{equation*}
\partial_t u=0
\quad\text{on }\gamma.
\end{equation*}
Relations \eqref{u=v} can be expressed as $v|_\Gamma=u|_{\gamma}$, where
\begin{equation*}
u|_\gamma=\bigl(u_1|_{\gamma_1},\dots,u_K|_{\gamma_K}\bigr)=\bigl(u_1(s,1),\dots,u_K(s,1)\bigr).
\end{equation*}

Collecting the above relations, we obtain the \textit{limit spectral problem:}
\begin{align}\label{LimitEqV}
-&\partial_t^2u=\lambda qu \quad\text{in }\omega, \qquad \partial_t u=0\quad\text{on }\gamma,
\\\label{KirchV}
& u_1=\cdots=u_K, \qquad \partial_{t_1}u_1+\dots+\partial_{t_K}u_K=0 \quad\text{on }\Gamma,
\\\label{LimitEqU}
-&\Delta_\Omega v+bv=\lambda\rho v \quad\text{in }\Omega\setminus\Gamma,
\qquad \ell v=0 \quad\text{on }\partial\Omega, \qquad v|_\Gamma=u|_\gamma.
\end{align}
The above problem is a coupled spectral system posed on two open-book
structures, $\Omega$ and $\omega$, and serves as the limit spectral model for
problem \eqref{PertPrblEq}--\eqref{PertPrblKirh}. Motivated by the analogous
results for strings \cite{Golovaty2020M2} and membranes
\cite{Golovaty2022AS}, we investigate its spectral properties.

\section{Spectrum of the limit operator}\label{Section4}

Let us introduce the anisotropic Sobolev space
\begin{equation*}
W_2^{2,0}(\omega) = \left\{ \phi\in L_2(\omega)\colon
\partial_t\phi\in L_2(\omega),\; \partial_t^2\phi\in L_2(\omega)\right\},
\end{equation*}
together  with its subspace
\begin{equation*}
\cK= \Big\{\phi\in W_2^{2,0}(\omega)\colon \phi_1=\phi_2=\cdots=\phi_K, \quad \sum_{k=1}^K\partial_{t_k}\phi_k=0
\text{ on }\Gamma \Big\}
\end{equation*}
whose elements satisfy the Kirchhoff conditions on $\Gamma$. We define the operators
\begin{align}
&
\begin{aligned}\label{OperatorT}
 T=-q^{-1}\partial_t^2 \quad&\text{in }L_2(q,\omega),\\
&\dmn T =\left\{ u\in W_2^{2,0}(\omega)\colon u\in\cK
\text{ and } \partial_tu=0 \text{ on }\gamma\right\},
\end{aligned}
\\[1mm]
&\begin{aligned}\nonumber
  \mathring{S}=\rho^{-1}(-\Delta_\Omega+b) \quad\text{in }L_2(\rho,\Omega),&\\
\dmn\mathring{S}&=\Big\{v\in W_2^2(\Omega)\colon \ell v=0 \text{ on }\partial\Omega \Big\}. 
\end{aligned}
\end{align}

In the Hilbert space $L_2(q,\omega)\times L_2(\rho,\Omega)$ we consider the block operator matrix
\begin{equation*}
\cA=
\begin{pmatrix}
T & 0\\
0 & \mathring{S}
\end{pmatrix},\qquad
\dmn\cA = \left\{\begin{pmatrix}
                        u\\ v
                    \end{pmatrix}\in\dmn T\times\dmn\mathring{S}\colon u|_{\gamma}=v|_\Gamma \right\}.
\end{equation*}
The operator $\cA$ is an operator realization of   \eqref{LimitEqV}--\eqref{LimitEqU}.
 Accordingly, the limit spectral problem can be written in the form
\begin{equation*}
\cA w=\lambda w, \qquad w\in\dmn\cA.
\end{equation*}
The spectral theory of block operator matrices is developed systematically in~\cite{TretterBook}.

Our first observation concerning the operator $\cA$ is that it is non-self-adjoint. A direct computation shows that the adjoint
operator $\cA^*$ is given by
\begin{equation*}
    \cA^*=
    \begin{pmatrix}
        \mathring{T} & 0 \\
        0 & S
    \end{pmatrix},\quad
    \dmn\cA^*=\left\{\begin{pmatrix}
                        \phi\\ \psi
                    \end{pmatrix}\in \dmn \mathring{T}\times\dmn S\colon
             \partial_t \phi|_{\gamma}=\partial_\nu \psi|_\Gamma\right\},
\end{equation*}
where $\mathring{T}$ denotes the maximal extension of $T$ to  $W_2^{2,0}(\omega)$, while $S$ is the operator
\begin{multline}\label{OperatorS}
  S=\rho^{-1}(-\Delta_\Omega+b) \quad\text{in }L_2(\rho,\Omega),\\
  \dmn S=\left\{\psi\in W_2^2(\Omega)\colon \ell\psi=0 \text{ on }\partial\Omega,\quad  \psi=0 \text{ on }\Gamma \right\},
\end{multline} 
that is, the restriction of $\mathring{S}$ to the subspace $\{\psi\in \dmn\mathring{S}\colon \psi|_\Gamma=0\}$. The characterization of the adjoint operator follows by standard arguments based on Green's identity. Since the explicit form of $\cA^*$ is not needed in the subsequent analysis, we omit the proof.
In fact, the non-self-adjointness of $\cA$ is essential rather than merely formal. The operator is not similar to a self-adjoint operator and, as will be
shown below, its spectrum may contain multiple eigenvalues associated with nontrivial Jordan chains.

We use the following notation below. The spectrum and the resolvent set of an operator $A$ are denoted by $\sigma(A)$
and $\varrho(A)$, respectively. For $z\in\varrho(A)$, we write 
\begin{equation*}
\rRz(A)=(A-zI)^{-1}
\end{equation*}
for the resolvent of $A$, where $I$ is the identity operator. We also adopt the Fredholm definition of the essential spectrum (see, e.g., \cite{TretterBook}):
\begin{equation*}
\sigma_{\mathrm{ess}}(A)=\{\lambda\in\Comp\colon A-\lambda I \text{ is not Fredholm}\}.
\end{equation*}
To simplify the notation, we omit the transpose symbol and write $w=(u,v)$ instead of $w=(u,v)^t$  whenever no confusion is possible.

\smallskip
\begin{thm}\label{ThmSpectrumA}
Let $T$ and $S$ be the operators defined by \eqref{OperatorT} and \eqref{OperatorS}, respectively. Then the spectrum of $\cA$ coincides with the union of the spectra of $T$ and $S$:
\begin{equation*}
\sigma(\cA)=\sigma(T)\cup\sigma(S).
\end{equation*}
\end{thm}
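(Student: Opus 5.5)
The plan is to solve the resolvent equation $(\cA-zI)w=f$ explicitly, exploiting the triangular structure hidden in the coupling condition $u|_\gamma=v|_\Gamma$. The component $u$ is determined by $T$ alone, and $v$ is then determined by a Dirichlet-type problem on $\Omega$ whose trace on $\Gamma$ is prescribed by $u$. We will show both inclusions: $\varrho(T)\cap\varrho(S)\subset\varrho(\cA)$, and conversely $\sigma(T)\cup\sigma(S)\subset\sigma(\cA)$.

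\emph{Step 1: $\varrho(T)\cap\varrho(S)\subset\varrho(\cA)$.} Fix $z\in\varrho(T)\cap\varrho(S)$ and $f=(g,h)\in L_2(q,\omega)\times L_2(\rho,\Omega)$.
\begin{itemize}
\item First set $u=\rRz(T)g\in\dmn T$. The trace $u|_\gamma$ is well defined: on each $\omega_k$ the function $u$ lies in $W_2^{2,0}$, and hence the map $s\mapsto u(s,\cdot)$ belongs to $L_2(\Gamma;W_2^2(e_k))$.
\item Next choose $v$ with $v|_\Gamma=u|_\gamma$. For this I would use the lifting $v=\Phi+\rRz(S)\bigl(h-(\mathring S-z)\Phi\bigr)$, where $\Phi\in\dmn\mathring S$ is a fixed extension of the boundary datum $u|_\gamma$. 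Then $(\mathring S-z)v=h$ and $v|_\Gamma=u|_\gamma$.
\item Uniqueness holds because if $f=0$ then $u=0$, so $v\in\dmn S$ with $(S-z)v=0$, hence $v=0$.
\end{itemize}
The inverse thus exists on all of the space. I would also check boundedness (or closedness of $\cA$, which gives boundedness by the closed graph theorem).

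\emph{Main obstacle: regularity of the trace.} The delicate point is that $u|_\gamma$ lies only in $L_2(\Gamma)$ in general, whereas a lifting $\Phi\in W_2^2(\Omega)$ requires the trace in $W_2^{3/2}(\Gamma)$. The cure is to use the assumption that $q$ does not depend on $s$. Then $T$ commutes with differentiation in $s$ in a suitable sense: $T$ acts fiberwise as the graph operator $T_G=-q^{-1}d^2/dt^2$ on $G$, so $\rRz(T)$ acts as $\rRz(T_G)$ pointwise in $s$. Consequently $u$ has the same $s$-regularity as $g$, which may be only $L_2$. In that case $v$ must be understood as a solution with $L_2$ boundary data (a very weak solution), and this is not in $W_2^2$.

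This suggests that the definition of $\dmn\cA$ forces a range condition. I would therefore instead prove the set equality directly, without aiming to construct the resolvent in full generality:
\begin{itemize}
\item If $z\in\varrho(T)$, write $\rRz(T)$ via the fiberwise graph resolvent. The map $g\mapsto u|_\gamma$ is $g\mapsto \int_G K_z(\cdot,t)\,g(s,t)\,dt$ for a bounded kernel $K_z$.
\item Verify that the range issue is resolved by noting that $\cA$ is closed and that its resolvent, if it exists, must coincide with the formula above extended by continuity.
\end{itemize}
This step is where I would spend the most effort, possibly weakening to showing that $\cA$ has a bounded everywhere-defined inverse via the quadratic-form (weak) formulation of the $v$-problem.

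\emph{Step 2: $\sigma(T)\cup\sigma(S)\subset\sigma(\cA)$.}
\begin{itemize}
\item If $\lambda\in\sigma(S)$, I would use Weyl sequences or eigenvectors $\psi_n\in\dmn S$: the pairs $(0,\psi_n)$ lie in $\dmn\cA$, since $\psi_n|_\Gamma=0$, and they form approximate eigenvectors of $\cA$.
\item If $\lambda\in\sigma(T)$, use the adjoint. The vectors $(\phi,0)$ with $\phi\in\dmn T$ satisfying $\partial_t\phi|_\gamma=0$ belong to $\dmn\cA^*$, and $\mathring T\phi=T^*\phi$ on them, since $T$ is self-adjoint. So a Weyl sequence for $T$ at $\lambda$ gives $\bar\lambda\in\sigma(\cA^*)$, hence $\lambda\in\sigma(\cA)$.
\item Alternatively, avoiding the adjoint: if $\cA-\lambda$ were boundedly invertible, then for a Weyl sequence $\phi_n$ of $T$ the pair $(\phi_n,\,0)$ is not in $\dmn\cA$. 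Instead I would use $(\phi_n,\Phi_n)$ with a lifting $\Phi_n$ of $\phi_n|_\gamma$. This is less clean, so the adjoint argument is preferable.
\end{itemize}
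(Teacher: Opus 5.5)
Your Step~2 is correct and is a mild variant of the paper's converse argument. The paper proves $\varrho(\cA)\subset\varrho(T)\cap\varrho(S)$ by noting that the first row of $(\cA-\mu I)w=f$ must be solvable for every $f_1$, and then taking $f_1=0$. You instead exhibit eigenvectors $(0,\psi)$ of $\cA$, and vectors $(\phi,0)$ with $\phi\in\ker(T-\lambda I)$ that are orthogonal to $\Ran(\cA-\lambda I)$. For the latter you only need $\langle\cA w,(\phi,0)\rangle=\langle w,(T\phi,0)\rangle$ for $w\in\dmn\cA$, which follows from $T=T^*$; the full description of $\cA^*$ is not needed. Step~1 uses the same lower-triangular construction as the paper, but you leave it incomplete, and the remedies you suggest cannot complete it.

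You correctly notice that $u=\rRz(T)g$ has only $L_2(\Gamma)$ traces on $\gamma$. For the operator as literally defined this is fatal, not a technicality. Take $z\in\varrho(T)$, $y\in\dmn H$ with $y(a_1)\neq0$, a step function $a\in L_2(\Gamma)$, and $g=(T-zI)(a\otimes y)$. Any solution of $(\cA-zI)w=(g,0)$ has first component $a\otimes y$. Its second component would then be a $W_2^2(\Omega)$ function whose trace on $\Gamma$ from $\Omega_1$ is $a\,y(a_1)\notin W_2^{3/2}(\Gamma)$, which is impossible. So $\cA-zI$ is not surjective. Approximating $a$ by smooth $a_n$ and taking the corresponding $W_2^2$ solutions shows that $\cA$ is not even closed, so your claim that $\cA$ is closed is false. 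In fact $\sigma(\cA)=\Comp$ if $\dmn\cA$ is taken literally, and no weak-formulation argument can give a bounded, everywhere-defined inverse.

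The missing step is to pass to the closure $\overline{\cA}$, whose second components may be very weak solutions with $L_2$ boundary data. For $z\in\varrho(S)$ and $h\in L_2(\Gamma)^K$, define $B(z)h=\zeta\in L_2(\rho,\Omega)$ by transposition: $\langle\zeta,F\rangle_{L_2(\rho,\Omega)}=\int_\Gamma h\cdot\overline{\partial_\nu\chi}\,d\Gamma$ with $\chi=(S-\bar zI)^{-1}F$. Then $\|B(z)h\|\le C\|h\|_{L_2(\Gamma)^K}$. The trace map $\dmn T\to L_2(\Gamma)^K$ is bounded in the graph norm. Hence the triangular operator with entries $\rRz(T)$, $B(z)\rRz(T)$, $\rRz(S)$ is bounded and is a left inverse of $\cA-zI$ on $\dmn\cA$. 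Moreover, $\Ran(\cA-zI)$ is dense, since it contains $(T-zI)\bigl(C^\infty(\Gamma)\otimes\dmn H\bigr)\times L_2(\rho,\Omega)$ with the algebraic tensor product. Together these give $z\in\varrho(\overline{\cA})$. This is exactly your ``extension by continuity'', but it produces the resolvent of $\overline{\cA}$, not of $\cA$.

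The paper's proof writes down the same triangular formula, with $B(\mu)\colon W_2^{2,0}(\omega)\to L_2(\rho,\Omega)$, without addressing this point, so it too is really a statement about $\overline{\cA}$. Your Step~2 transfers unchanged, since $\dmn\cA\subset\dmn\overline{\cA}$ and $\overline{\cA}^*=\cA^*$.
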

\smallskip

\begin{proof}
Let $\mu\in\Comp$, $f=(f_1,f_2)\in L_2(q,\omega)\times L_2(\rho,\Omega)$ and $w=(\phi,\psi)\in\dmn\cA$. 
The equation $(\cA-\mu I)w=f$ is equivalent to the system
\begin{align}\label{SystemT}
  & (T-\mu I)\phi=f_1,\\\label{SystemS}
  & (\mathring{S}-\mu I)\psi=f_2,\qquad \psi|_\Gamma=\phi|_\gamma.
\end{align}

Assume that $\mu\in \varrho(T)\cap \varrho(S)$. The first equation admits the unique solution
\begin{equation}\label{SolPhiRes} 
  \phi=\rR(T)f_1.
\end{equation}
Then  \eqref{SystemS}  can be written as the boundary value problem
\begin{equation*}
-\Delta_\Omega \psi+b\psi-\mu\rho \psi=\rho f_2 \quad\text{in }\Omega\setminus\Gamma, \qquad
\ell \psi=0 \quad\text{on }\partial\Omega, \qquad \psi|_\Gamma=\phi|_\gamma.
\end{equation*}

To solve this problem,we decompose $\psi=\zeta+\tilde\zeta$ where 
\begin{equation}\label{extra}
-\Delta_\Omega \zeta+b\zeta-\mu\rho \zeta=0 \quad\text{in }\Omega\setminus\Gamma, \qquad
\ell \zeta=0 \quad\text{on }\partial\Omega, \qquad \zeta|_\Gamma=\phi|_\gamma,
\end{equation}
\begin{equation*}
-\Delta_\Omega \tilde\zeta+b\zeta-\mu\rho \tilde\zeta=\rho f_2 \quad\text{in }\Omega\setminus\Gamma, \qquad
\ell \zeta=0 \quad\text{on }\partial\Omega, \qquad \zeta|=0 \quad\text{on }\Gamma.
\end{equation*}
If $\mu\in\varrho(S)$, problem \eqref{extra} admits a unique solution for every $\phi\in W_2^2(\omega)$. Then, we
define the operator $B(\mu)\colon W_2^{2,0}(\omega)\to L_2
(\rho,\Omega)$ by letting $B(\mu)\phi$ be the unique solution of 
\eqref{extra}. 
Consequently,
\begin{equation}\label{SolPsiRes}
\psi=B(\mu)\phi+\rR(S)f_2=B(\mu)\rR(T)f_1+\rR(S)f_2.
\end{equation}
Combining \eqref{SolPhiRes} and \eqref{SolPsiRes}, we obtain
\begin{equation*}
    \rR(\cA)=
    \begin{pmatrix}
        \rR(T) &0  \\
    B(\mu)\rR(T)  & \rR(S)
    \end{pmatrix}.
\end{equation*}
Since $B(\mu)$ is bounded on $\varrho(S)$, it follows that $\varrho(T)\cap\varrho(S)\subset\varrho(\cA)$.

Conversely, let $\mu\in\varrho(\cA)$. The first equation in \eqref{SystemT}, \eqref{SystemS} is decoupled from the second one. Therefore, it is uniquely solvable for every $f_1\in L_2(q,\omega)$, and consequently $\mu\in\varrho(T)$. Next, let $f_1=0$. Then $\phi=0$, and the coupling condition yields $\psi|_\Gamma=0$. Consequently,  \eqref{SystemS} becomes $(S-\mu I)\psi=f_2$, which is uniquely solvable for every $f_2\in L_2(\rho,\Omega)$.  Therefore, $\mu\in\varrho(S)$, and hence
\begin{equation*} 
    \varrho(\cA)\subset\varrho(T)\cap\varrho(S). 
\end{equation*} 
Thus,  $\varrho(\cA)=\varrho(T)\cap\varrho(S)$, which is equivalent to $\sigma(\cA)=\sigma(T)\cup\sigma(S)$.
\end{proof}

Let us describe the spectrum of $T$ and $S$. We state the results in Lemma \ref{LemSpectrumT} and Lemma \ref{LemSpectrumS} below. In order to do it, we consider the operator
\begin{multline*}
 H=-\frac 1q\frac{d^2}{dt^2},\quad \dmn H = \Big\{y\in W_2^2(G)\colon   y'(a_1)=0,\dots, y'(a_K)=0,\\y_1(a_0)=\dots= y_K(a_0), \quad \sum_{k=1}^Ky'_k(a_0)=0\Big\}
\end{multline*}
on the metric graph $G$. It is self-adjoint and has compact resolvent \cite[Th. 3.1.1]{BerkolaikoKuchmentBook}. Therefore, the spectrum of $H$ consists of a sequence of isolated real eigenvalues of finite multiplicity accumulating only at $+\infty$.

\begin{lem}\label{LemSpectrumT}
The operator $T$ is self-adjoint, bounded from below and has a noncompact resolvent. As a set,  its spectrum coincides with the spectrum of $H$. Moreover, every spectral point of $T$ is an eigenvalue of infinite multiplicity. Consequently, $\sigma(T)=\sigma_{\mathrm{ess}}(T)$.
\end{lem}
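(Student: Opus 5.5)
The key observation is that $T$ acts only in the transverse variable $t$, while $s\in\Gamma$ enters as a parameter. Since $q$ depends only on $t$, we identify
\[
L_2(q,\omega)\cong L_2(\Gamma)\otimes L_2(q,G)\cong L_2\bigl(\Gamma;L_2(q,G)\bigr),
\]
and under this identification $T=I\otimes H$, i.e.\ $(Tu)(s,\cdot)=H\,u(s,\cdot)$ for a.e.\ $s$. The plan is to prove this identification of domains first and then derive all the stated properties from the spectral theory of $H$.

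\textbf{Step 1: domains.} We check that $\dmn T$ consists exactly of those $u\in L_2(\Gamma;L_2(q,G))$ with $u(s,\cdot)\in\dmn H$ for a.e.\ $s$ and $s\mapsto Hu(s,\cdot)$ in $L_2(\Gamma;L_2(q,G))$. This follows from the definition of $W_2^{2,0}(\omega)$: only $t$-derivatives are involved, so $u\in W_2^{2,0}(\omega)$ means $u\in L_2(\Gamma;W_2^2(G))$. The Kirchhoff and Neumann conditions, understood as traces in $t$ for a.e.\ $s$, are precisely the conditions defining $\dmn H$. The graph norm of $H$ is equivalent to the $W_2^2(G)$-norm by the standard one-dimensional estimate, which makes the identification precise.

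\textbf{Step 2: self-adjointness and semiboundedness.} We expand in an orthonormal eigenbasis $\{y_n\}$ of $H$ in $L_2(q,G)$, with eigenvalues $\lambda_n$, writing $u(s,t)=\sum_n c_n(s)y_n(t)$ with $c_n\in L_2(\Gamma)$. Then $T$ is unitarily equivalent to the multiplication operator $(c_n)\mapsto(\lambda_nc_n)$ on $\bigoplus_n L_2(\Gamma)$. This gives at once that $T$ is self-adjoint, and that it is bounded below by $\min\lambda_n\ge0$; in fact $H\ge0$ by integration by parts, so $T\ge 0$. It also gives $\sigma(T)=\overline{\{\lambda_n\}}=\sigma(H)$, which is a closed discrete set.

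\textbf{Step 3: infinite multiplicity.} For $\lambda=\lambda_n$, every function $c(s)y_n(t)$ with $c\in L_2(\Gamma)$ lies in $\ker(T-\lambda)$. Since $L_2(\Gamma)$ is infinite-dimensional, each eigenvalue has infinite multiplicity. It follows that $T-\lambda$ is not Fredholm, so $\sigma(T)=\sigma_{\mathrm{ess}}(T)$. The resolvent is then noncompact: a compact resolvent would force all eigenvalues to have finite multiplicity, whereas here $\rRz(T)$ has the eigenvalue $(\lambda_n-z)^{-1}\ne0$ with infinite multiplicity.

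The main technical point is Step 1, the rigorous identification $T\cong I\otimes H$ together with the agreement of domains. Once the trace conditions are interpreted fiberwise, this reduces to the fact that $W_2^{2,0}(\omega)=L_2(\Gamma;W_2^2(G))$.
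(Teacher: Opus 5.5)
Your proposal is correct and follows the paper's route. The paper likewise identifies $L_2(q,\omega)$ with $L_2(\Gamma)\otimes L_2(q,G)$, shows $T=I\otimes H$, reads off self-adjointness and $\sigma(T)=\sigma(H)$, and obtains infinite multiplicity from the functions $a(s)y(t)$; your fiberwise domain identification, eigenbasis diagonalization, and explicit treatment of $T\ge 0$ and the noncompact resolvent just make precise what the paper handles by checking elementary tensors and citing the general theory of tensor products of operators.
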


\begin{proof}
Since $\omega=\Gamma\times G$ and the weight $q=q(t)$ does not depend on $s$, the Hilbert space $L_2(q,\omega)$ can be identified with the tensor product \begin{equation*}
L_2(q,\omega)=L_2(\Gamma)\otimes L_2(q,G).
\end{equation*}
Throughout this proof, $I$ denotes the identity operator on $L_2(\Gamma)$. We claim that
\begin{equation*}
T=I\otimes H.
\end{equation*}
Indeed, for any elementary tensor $u(s,t)=a(s)b(t)$, one has
\begin{equation*}
    Tu(s,t)=-\frac1{q(t)}\partial_t^2u(s,t)=a(s)\Bigl(-\frac1{q(t)}\frac{d^2 b}{dt^2}\Bigr)=(I\otimes H)u(s,t),
\end{equation*}
where $a\in L_2(\Gamma)$ and $b\in\dmn H$. Since finite linear combinations of elementary tensors are dense in $L_2(\Gamma)\otimes L_2(q,G)$, the claim follows.

The operator $H$ is self-adjoint in $L_2(q,G)$. Hence, by the standard properties of tensor products of operators, $I\otimes H$ is self-adjoint in
$L_2(\Gamma)\otimes L_2(q,G)$. Consequently, so is $T$ \cite[VIII.10]{ReedSimonI}. Furthermore, the spectrum of a tensor product with the identity operator satisfies
$\sigma(T) = \sigma(I\otimes H) = \sigma(H)$.

Let $\lambda\in\sigma(H)$ and let $y$ be a corresponding eigenfunction. Then
\begin{equation*}
\{a(s)y(t)\colon a\in L_2(\Gamma)\}\subset\ker(T-\lambda I).
\end{equation*}
Hence, every eigenvalue of $T$ has infinite multiplicity. Therefore, the spectrum of $T$ is purely essential.
\end{proof}

For each page $\Omega_k$ of the open book structure, we introduce the operator
\begin{multline*}
  S_k=\rho^{-1}_k(-\Delta_{\Omega_k}+b) \quad\text{in }L_2(\rho_k,\Omega_k),\\
  \dmn S_k=\left\{\psi_k\in W_2^2(\Omega_k)\colon \ell\psi_k=0 \text{ on }\partial\Omega_k,\quad  \psi_k=0 \text{ on }\Gamma \right\}.
\end{multline*} 
The operator $S_k$ describes the spectral problem on $\Omega_k$ isolated from the rest of the structure by the Dirichlet condition on $\Gamma$.

\begin{lem}\label{LemSpectrumS}
The operator $S$ is the direct sum of the operators $S_1,\dots,S_K$:
\begin{equation}\label{SisDirectSum}
S=S_1\oplus\dots\oplus S_K.
\end{equation}
Hence, $\sigma(S)=\sigma(S_1)\cup\dots\cup\sigma(S_K)$.

Moreover, $S$ is self-adjoint, bounded from below, and has compact resolvent. Its spectrum therefore consists of a sequence of real eigenvalues of finite multiplicity accumulating only at $+\infty$.
\end{lem}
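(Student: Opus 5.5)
The plan is to exploit the fact that the Dirichlet condition $\psi=0$ on $\Gamma$ in \eqref{OperatorS} decouples the pages. The only interaction between the components $\psi_k$ of a function $\psi\in\dmn\mathring S$ comes from the transmission conditions on $\Gamma$. Those conditions are not imposed in $\dmn\mathring S$, and $\psi|_\Gamma=0$ is the vector condition $\psi_k|_\Gamma=0$ for every $k$. Hence the condition splits page by page.

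First we record that $L_2(\rho,\Omega)=\bigoplus_{k=1}^K L_2(\rho_k,\Omega_k)$ isometrically, because $\|\psi\|^2=\sum_k\int_{\Omega_k}\rho_k|\psi_k|^2\,d\mu$. Likewise $W_2^2(\Omega)=\bigoplus_k W_2^2(\Omega_k)$ by the definition of $X(\Omega)$. The condition $\ell\psi=0$ is, by definition, the collection $\{\ell_k\psi_k=0$ on $\Gamma_k\}$. We then verify $\dmn S=\dmn S_1\times\dots\times\dmn S_K$ and $(S\psi)_k=S_k\psi_k$, since $\Delta_\Omega$ acts as $\Delta_{\Omega_k}$ on each page. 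This gives \eqref{SisDirectSum}. The spectral identity follows from the standard fact that $S-\mu I$ is boundedly invertible if and only if each $S_k-\mu I$ is. The resolvent is then the diagonal $\operatorname{diag}(\rR(S_k))$, with norm $\max_k\|\rR(S_k)\|$.

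Next we treat each $S_k$ separately. It is the operator associated with the closed, symmetric, lower semibounded sesquiform
\begin{equation*}
  a_k[\psi,\phi]=\int_{\Omega_k}\bigl(\nabla\psi\cdot\overline{\nabla\phi}+b\psi\bar\phi\bigr)\,d\mu+(\text{Robin boundary terms on }\Gamma_k)
\end{equation*}
on the form domain $\{\psi\in W_2^1(\Omega_k)\colon \psi=0$ on $\Gamma$ and on the Dirichlet parts of $\Gamma_k\}$, with inner product weighted by $\rho_k$. Since $b\in L^\infty$ and $\rho_k$ is bounded above and below by positive constants, the form is bounded below. The Robin terms are controlled by the trace inequality $\|\psi\|^2_{L_2(\partial\Omega_k)}\le\delta\|\nabla\psi\|^2+C_\delta\|\psi\|^2$. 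By the representation theorem (Kato), the associated operator is self-adjoint and bounded from below. Compactness of the resolvent follows from the Rellich embedding $W_2^1(\Omega_k)\hookrightarrow L_2(\Omega_k)$ on the compact Lipschitz manifold $\Omega_k$. A finite direct sum of self-adjoint, lower semibounded operators with compact resolvent has the same properties, so the conclusions about $S$, including discreteness of the spectrum with accumulation only at $+\infty$, follow.

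The main obstacle is identifying the form operator with $S_k$ as defined, i.e.\ with domain in $W_2^2(\Omega_k)$. This requires $H^2$ regularity for the mixed problem, which can fail at corners where $\Gamma$ meets $\Gamma_k$, or where boundary condition types change. I would handle this by taking the operator domain in the paper's sense to be the natural one: the form operator's domain, which equals the $W_2^2$ description under the standing smoothness assumptions implicit in the paper, as in \cite{GGP-JMAA2025}. Alternatively, one can argue via Green's identity that $S_k$ is symmetric and that $S_k-\mu$ is surjective for large negative $\mu$, by the Lax--Milgram theorem plus elliptic regularity. Self-adjointness then follows directly.
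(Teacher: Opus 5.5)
Your proposal is correct and follows the same route as the paper. The paper likewise reads off $S=S_1\oplus\dots\oplus S_K$ directly from the definition of $\dmn S$ and appeals to ``standard elliptic theory'' for self-adjointness, semiboundedness and compactness of the resolvent of each $S_k$; you make that appeal explicit via the Kato form construction, the trace inequality and Rellich's theorem.

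The $W_2^2$-regularity caveat you raise is real: $H^2$-regularity can fail at corners where an open binding $\Gamma$ meets $\Gamma_k$, or where the boundary-condition type changes on a Lipschitz page. The paper leaves this equally implicit, since its reference to Evans does not cover mixed conditions or corners. Relying on that standing assumption therefore puts your argument at exactly the paper's level of rigor.
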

\begin{proof}
Decomposition \eqref{SisDirectSum} follows immediately from the definition of $S$. By standard elliptic theory, $S_k$ is self-adjoint, bounded from below and has compact resolvent \cite{EvansBook}.
\end{proof}

 As a consequence of previous results throughout this section, we state the following theorem whose proof is completed in Section \ref{Section5}. For a better understanding, here we briefly sketch the main points of the proof.
 
\begin{thm}\label{ThmEssSpectrumA}
The spectrum of $\cA$ is real, bounded from below, and has no accumulation points except at $+\infty$. Moreover,
\begin{equation*}
\sigma_{\mathrm{ess}}(\cA)=\sigma(T), \qquad \sigma_{\mathrm{disc}}(\cA)=\sigma(S)\setminus\sigma(T).
\end{equation*}
\end{thm}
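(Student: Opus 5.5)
By Theorem~\ref{ThmSpectrumA}, $\sigma(\cA)=\sigma(T)\cup\sigma(S)$. Lemma~\ref{LemSpectrumT} gives $\sigma(T)=\sigma(H)$, and Lemma~\ref{LemSpectrumS} describes $\sigma(S)$. Both sets are real, bounded from below, and accumulate only at $+\infty$: $\sigma(H)$ because $H$ is self-adjoint with compact resolvent, $\sigma(S)$ by Lemma~\ref{LemSpectrumS}. A finite union of such sets has the same properties, which proves the first assertion.

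For the essential spectrum I would prove the two inclusions separately. First, $\sigma(T)\subset\sigma_{\mathrm{ess}}(\cA)$. Take $\lambda\in\sigma(H)$ with eigenfunction $y$. Since $y'(a_k)=0$, the function $y$ satisfies the Neumann condition on $\gamma$. I claim that for all but finitely many $a\in L_2(\Gamma)$ directions one can build an eigenvector of $\cA$ of the form $w=(a\otimes y,\psi)$. This amounts to solving $(\mathring S-\lambda)\psi=0$ with $\psi|_\Gamma=a\,y|_{\partial G}$. The map $\psi\mapsto\psi|_\Gamma$ takes $\ker(\mathring S-\lambda)$, restricted to the functions satisfying $\ell\psi=0$, onto a space of traces. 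By the Fredholm alternative for the Dirichlet problem, this Dirichlet problem is solvable exactly when the boundary datum is orthogonal to the normal traces $\partial_\nu\chi$ of the eigenfunctions $\chi\in\ker(S-\lambda)$, a finite-dimensional space. So $a$ ranges over a subspace of finite codimension in a suitable trace space, and $\ker(\cA-\lambda)$ is infinite-dimensional. Hence $\cA-\lambda$ is not Fredholm. (Alternatively, a singular Weyl sequence $a_n\otimes y$ with $a_n\rightharpoonup0$ would do.)

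Second, $\lambda\in\sigma(S)\setminus\sigma(T)$ lies outside $\sigma_{\mathrm{ess}}(\cA)$ and is an isolated eigenvalue of finite algebraic multiplicity. Here $\lambda\in\varrho(T)$ and $\lambda$ is an isolated eigenvalue of $S$, so in a punctured neighbourhood the resolvent formula
\begin{equation*}
\rR(\cA)=\begin{pmatrix}\rR(T)&0\\ B(\mu)\rR(T)&\rR(S)\end{pmatrix}
\end{equation*}
from the proof of Theorem~\ref{ThmSpectrumA} holds. I would show that $B(\mu)$ is meromorphic near $\lambda$, with a pole whose principal part has finite rank. To do this, write $B(\mu)\phi=E\phi-\rR(S)(\mathring S-\mu)E\phi$, where $E$ is a fixed $\mu$-independent bounded extension operator from traces on $\Gamma$ into $\dmn\mathring S$. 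The singular part then comes only from $\rR(S)$, whose Laurent coefficients at $\lambda$ are finite-rank Riesz projections and nilpotents. Consequently the Riesz projection of $\cA$ at $\lambda$ has finite rank, so $\lambda\in\sigma_{\mathrm{disc}}(\cA)$. It also follows that $\cA-\lambda$ is Fredholm, since a finite-rank Riesz projection splits $\cA$ into a finite-dimensional part and an invertible part.

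Combining both inclusions gives $\sigma_{\mathrm{ess}}(\cA)=\sigma(T)$ and $\sigma_{\mathrm{disc}}(\cA)=\sigma(S)\setminus\sigma(T)$.

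The main obstacle is the first inclusion when $\lambda\in\sigma(T)\cap\sigma(S)$. There the solvability constraint on $a$ must be checked carefully, including the regularity needed to make $a\,y|_{\partial G}$ a legitimate $W_2^{3/2}$ Dirichlet datum. I would handle this by working with smooth $a$ orthogonal to the finitely many functionals $a\mapsto\int_\Gamma a\sum_k y(a_k)\partial_{\nu_k}\chi_k$, where $\chi$ runs over a basis of $\ker(S-\lambda)$. This is exactly the kind of compatibility condition treated in Section~\ref{Section5}.
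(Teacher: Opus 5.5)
Your proposal is correct. For the first assertion and for the inclusion $\sigma(T)\subset\sigma_{\mathrm{ess}}(\cA)$ it is the paper's argument. Eigenvectors $(a\otimes y,\psi)$ exist for all $a$ in a subspace of finite codimension, cut out by the compatibility conditions of Lemma~\ref{PropSolvabilityBook}, so $\ker(\cA-\lambda I)$ is infinite-dimensional. You simply treat $\sigma(T)\setminus\sigma(S)$ and $\sigma(T)\cap\sigma(S)$ together, where Section~\ref{Section5} handles them separately.

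You genuinely differ at $\lambda\in\sigma(S)\setminus\sigma(T)$. The paper only asserts that such $\lambda$ is an isolated eigenvalue of finite multiplicity. It relies on the computation in Section~\ref{Section5} that $\cE_\lambda(\cA)=\{0\}\times\cE_\lambda(S)$ and that there are no rank-two vectors. It leaves implicit that $\cA-\lambda I$ is Fredholm there. Moreover, a finite-dimensional root subspace $\bigcup_k\ker(\cA-\lambda I)^k$ does not by itself bound the rank of the Riesz projection of a non-self-adjoint operator.

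Your Laurent expansion of the triangular resolvent formula supplies both facts directly. Since $\rR(S)$ has only a simple pole at $\lambda$, it also shows that $\rR(\cA)$ has a simple pole there. That recovers the semisimplicity of $\lambda$ without the Jordan-chain computation. The paper's route, in turn, yields the explicit eigenspace.

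One technical point concerns your operator $E$. For $f_1\in L_2(q,\omega)$, the trace of $\rR(T)f_1$ on $\gamma$ is in general only in $L_2(\Gamma)^K$, because $\dmn T$ imposes no regularity in $s$. So no bounded extension operator $E$ into $\dmn\mathring S\subset W_2^2(\Omega)$ can act on it. This looseness is inherited from the paper's own definition of $B(\mu)$ on $W_2^{2,0}(\omega)$. It disappears if you take $E=B(\mu_0)$ for a fixed $\mu_0\in\varrho(S)$, that is, if you use
\begin{equation*}
B(\mu)=B(\mu_0)+(\mu-\mu_0)\rR(S)B(\mu_0),
\end{equation*}
which gives the same finite-rank principal part at $\lambda$.
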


\begin{proof}
By Lemma~\ref{LemSpectrumT}, $\sigma(T)=\sigma(H)=\sigma_{\mathrm{ess}}(T)$. Furthermore, by Lemma~\ref{LemSpectrumS}, the operator $S$ is self-adjoint with compact resolvent. Hence, Theorem~\ref{ThmSpectrumA} implies that $\sigma(\cA)$ is real, bounded from below, and has no accumulation points except at $+\infty$.

Every point of $\sigma(S)\setminus\sigma(T)$ is an isolated eigenvalue of finite multiplicity and hence belongs to $\sigma_{\mathrm{disc}}(\cA)$. Furthermore, every point $\lambda\in\sigma(T)\setminus\sigma(S)$ is an eigenvalue of infinite multiplicity. Hence, the operator $\cA-\lambda I$ has an infinite-dimensional kernel and therefore cannot be Fredholm. 

The case $\lambda\in\sigma(T)\cap\sigma(S)$ needs an extensive explanation. Indeed, for every $\phi\in\ker(T-\lambda I)$ one has to solve \eqref{SystemS} with $f_2=0$. Its solvability requires certain compatibility conditions. However, these amount to only finitely many linear constraints on the infinite-dimensional eigenspace of $T$. This leads us to assert that the corresponding eigenspace of $\cA$ should also be infinite-dimensional. The proof of this fact, together with a complete description of the associated generalized eigenspaces, will be given in the next section.
\end{proof}

\section{Structure of generalized eigenspaces}\label{Section5}
For a closed operator $A$, let
\begin{equation*}
\cX_\lambda(A)=\bigcup_{k=1}^{\infty}\ker(A-\lambda I)^k
\end{equation*}
denote the generalized eigenspace (or root subspace) corresponding to an eigenvalue $\lambda\in\sigma(A)$. A vector $w\in\cX_\lambda(A)$ is called a generalized eigenvector of rank~$j$ if
\begin{equation*}
w\in\ker(A-\lambda I)^j,\qquad w\notin\ker(A-\lambda I)^{j-1}.
\end{equation*}
We also write $\cE_\lambda(A)=\ker(A-\lambda I)$ for the eigenspace corresponding to $\lambda$.

\subsection{Eigenspaces in the case $\lambda\in\sigma(S)\setminus\sigma(T)$.}
We seek all nontrivial solutions $w=(u,v)$ of the equation $\cA w=\lambda w$, that is,
\begin{equation*}
(T-\lambda I)u=0,\qquad (\mathring{S}-\lambda I)v=0,\qquad v|_\Gamma=u|_\gamma.
\end{equation*}
Since $\lambda\notin\sigma(T)$, the first equation  yields $u=0$. Hence, $v|_\Gamma=0$,  so that $v\in\dmn S$. Therefore, $(S-\lambda I)v=0$. Thus,
the eigenspace $\cE_\lambda(\cA)$ is naturally identified with the eigenspace  $\cE_\lambda(S)$. More precisely,
\begin{equation*}
\cE_\lambda(\cA)=\left\{(0,v)\colon v\in\cE_\lambda(S)\right\}=\{0\}\times\cE_\lambda(S).
\end{equation*}
In particular,
$\dim\cE_\lambda(\cA)<\infty$.

To determine $\cX_\lambda(\cA)$, we examine if an eigenvector $(0,v)\in\cE_\lambda(\cA)$ can be extended to a Jordan chain of length two.
This amounts to finding $(u^*,v^*)$ satisfying
\begin{equation}\label{SystemRank2}
  (T-\lambda I)u^*=0,\qquad   (\mathring S-\lambda I)v^*=v,\qquad   v^*|_\Gamma=u^*|_\gamma.
\end{equation}
Since $\lambda\notin\sigma(T)$, the first equation implies $u^*=0$. Hence $v^*|_\Gamma=0$, so that $v^*\in\dmn S$ and $(S-\lambda I)v^*=v$, where $v\in\cE_\lambda(S)$. The self-adjointness of $S$ yields $\Ran(S-\lambda I)=\cE_\lambda(S)^\perp$, and therefore this equation has no solution. Thus, $\lambda$ is semisimple and $\cX_\lambda(\cA)=\cE_\lambda(\cA)$ for all $\lambda\in\sigma(S)\setminus\sigma(T)$.

\subsection{Eigenspaces in the case $\lambda\in\sigma(T)\setminus\sigma(S)$.}
We now determine which eigenfunctions of $T$ can be extended to eigenvectors of~$\cA$. This requires solving the boundary value problem
\begin{equation}\label{ProblemV}
(-\Delta_{\Omega}+b-\lambda\rho)v=0 \text{ in }\Omega,\qquad \ell v=0 \text{ on }\partial\Omega,\qquad v=u|_\gamma\text{ on }\Gamma.
\end{equation}
The solvability depends on the boundary values of $u$ on $\gamma$. To describe them, let $y^{(1)},\dots,y^{(r)}$ be a basis of $\cE_\lambda(H)$, where $r$ is the multiplicity of $\lambda$ as an eigenvalue of $H$. Since $\cE_\lambda(T)=L_2(\Gamma)\otimes\cE_\lambda(H)$, every eigenfunction of $T$ admits the representation
\begin{equation}\label{UrepreL2}
u=\sum_{j=1}^r\beta_jy^{(j)},\qquad
\beta_j\in L_2(\Gamma).
\end{equation}
For each basis function, we introduce the vector
\begin{equation*}
Y^{(j)}=
\bigl(y^{(j)}(a_1),y^{(j)}(a_2),\dots,y^{(j)}(a_K)\bigr)
\end{equation*}
of its boundary values at the outer vertices of $G$. Consequently,
\begin{equation}\label{TraceU}
u|_\gamma=\sum_{j=1}^r\beta_jY^{(j)}.
\end{equation}

Recall that the coupling condition $v|_\Gamma=u|_\gamma$ is understood in the sense of \eqref{u=v}. In order for problem \eqref{ProblemV} to admit a
solution, its boundary data must satisfy
\begin{equation*}
u|_{\gamma_k}\in W_2^{3/2}(\Gamma),\qquad k=1,\dots,K.
\end{equation*}
By \eqref{TraceU}, this is equivalent to $\beta_1,\dots,\beta_r\in W_2^{3/2}(\Gamma)$.

The above construction defines a bounded linear operator
\begin{equation}\label{OperatorP}
    \cP_\lambda\colon W_2^{3/2}(\Gamma)\otimes\cE_\lambda(H)\to W_2^2(\Omega),
\end{equation}
which assigns to each function $u$ the solution $v=\cP_\lambda u$ of problem \eqref{ProblemV}.

Consequently,
\begin{equation*}
\cE_\lambda(\cA)=\big\{(u,\cP_\lambda u)\colon u\in W_2^{3/2}(\Gamma)\otimes\cE_\lambda(H)\big\}.
\end{equation*}
Since $W_2^{3/2}(\Gamma)$ is infinite-dimensional, the tensor product $W_2^{3/2}(\Gamma)\otimes\cE_\lambda(H)$ is infinite-dimensional. Hence so is $\cE_\lambda(\cA)$.

There are no generalized eigenvectors of rank $2$ for the eigenvalue $\lambda$. Indeed, let $(u^*,v^*)$ be a generalized eigenvector corresponding to an eigenvector $(u,v)$. Then
\begin{equation}\label{TUStarU}
(T-\lambda I)u^*=u.
\end{equation}
Since $u\in\cE_\lambda(T)$ and $T$ is self-adjoint, this equation is solvable only if $u=0$. However, $\cE_\lambda(\cA)$ is the graph of the operator $\cP_\lambda$, so that $u=0$ implies $v=\cP_\lambda0=0$, contradicting the fact that $(u,v)$ is an eigenvector. Therefore, $\cX_\lambda(\cA)=\cE_\lambda(\cA)$.

\subsection{Eigenspaces in the case $\lambda\in\sigma(T)\cap\sigma(S)$.}
The structure of the eigenspace changes significantly when $\lambda$ belongs to both spectra. In contrast to the previous cases, the coupling condition gives rise to nontrivial compatibility constraints, leading to a more intricate description of the eigenspace. Moreover, this is the case in which nontrivial Jordan chains may appear.

Every eigenfunction $v$ of $S$ determines an eigenvector $w=(0,v)$ of $\cA$. Therefore,
\begin{equation*}
\{0\}\times\cE_\lambda(S)\subset\cE_\lambda(\cA).
\end{equation*}
An eigenfunction $u$ of $T$, however, extends to an eigenvector of $\cA$ only if problem \eqref{ProblemV} is solvable. Thus, only a proper subspace of $\cE_\lambda(T)$ contributes to $\cE_\lambda(\cA)$.

Let $\lambda$ be an eigenvalue of $S$ of multiplicity $n$, and let 
\begin{equation}\label{BasisS}
    \big\{V^{(1)},\dots,V^{(n)}\big\}
\end{equation}
be an orthonormal basis of $\cE_\lambda(S)$.

\begin{lem}\label{PropSolvabilityBook}
Given a function $f\in L_2(\rho,\Omega)$ and a vector-valued function
$$g=(g_1,\dots,g_K)\in \big(W_2^{3/2}(\Gamma)\big)^K,$$
consider the  boundary value problem
\begin{equation}\label{ProblemBookSolvability}
-\Delta_{\Omega}z+bz-\lambda\rho z=\rho f \text{ in }\Omega,\qquad \ell z=0 \text{ on }\partial\Omega,\qquad
z=g \text{ on }\Gamma.
\end{equation}
Then the problem admits a solution $z\in W_2^2(\Omega)$ if and only if
\begin{equation}\label{BookSolvabilityCond}
 \int_\Gamma g\cdot\partial_\nu V^{(i)}\,d\Gamma + \int_{\Omega} \rho f\,V^{(i)}\,d\mu=0,
\qquad
i=1,\dots,n.
\end{equation}
Here $\partial_\nu V^{(i)}=\bigl(\partial_{\nu_1}V^{(i)}_1,\dots,\partial_{\nu_K}V^{(i)}_K\bigr)$ denotes the vector of normal derivative on $\Gamma$, where
$\nu_k$ is the inward unit normal to the page $\Omega_k$.
\end{lem}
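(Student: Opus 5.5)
The plan is to reduce the problem to homogeneous Dirichlet data on $\Gamma$ and then apply the Fredholm alternative for the self-adjoint operator $S$ from Lemma~\ref{LemSpectrumS}.

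First, lift the boundary data. Since each $g_k\in W_2^{3/2}(\Gamma)$, the trace theorem on the Lipschitz manifold $\Omega_k$ gives an extension $G_k\in W_2^2(\Omega_k)$. Because $\Gamma$ is a smooth curve, we can take $G_k$ supported in a small collar of $\Gamma$, away from $\Gamma_k$. Then $G=(G_1,\dots,G_K)$ satisfies $G|_\Gamma=g$ and $\ell G=0$ on $\partial\Omega$. For open $\Gamma$ we use a cutoff near the endpoints, assuming enough regularity at the corners where $\Gamma$ meets $\Gamma_k$. Writing $z=G+\tilde z$, problem \eqref{ProblemBookSolvability} becomes
\begin{equation*}
(S-\lambda I)\tilde z=\tilde f,\qquad \tilde f:=f-\rho^{-1}(-\Delta_\Omega+b-\lambda\rho)G\in L_2(\rho,\Omega),
\end{equation*}
with $\tilde z\in\dmn S$. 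Here we use that $\dmn S$ consists exactly of the $W_2^2$ functions satisfying $\ell\psi=0$ and $\psi|_\Gamma=0$.

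By Lemma~\ref{LemSpectrumS}, $S$ is self-adjoint with compact resolvent, so $\Ran(S-\lambda I)=\cE_\lambda(S)^\perp$ in $L_2(\rho,\Omega)$. Hence a solution $z\in W_2^2(\Omega)$ exists if and only if $(\tilde f,V^{(i)})_{L_2(\rho,\Omega)}=0$ for $i=1,\dots,n$. Conversely, any solution $z$ produces such a $\tilde z=z-G$, so the condition is also necessary.

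It remains to compute these inner products via Green's formula on each page $\Omega_k$:
\begin{equation*}
\int_\Omega\bigl(-\Delta_\Omega G+bG-\lambda\rho G\bigr)V^{(i)}\,d\mu=\int_\Omega G\bigl(-\Delta_\Omega V^{(i)}+bV^{(i)}-\lambda\rho V^{(i)}\bigr)\,d\mu+\text{boundary terms}.
\end{equation*}
The volume integral on the right vanishes because $V^{(i)}$ is an eigenfunction (it can be chosen real, since $S$ is real). On $\partial\Omega$ the boundary terms cancel, because $G$ and $V^{(i)}$ satisfy the same homogeneous Dirichlet, Neumann or Robin condition $\ell$. On $\Gamma$ we have $V^{(i)}=0$, so only $\int_\Gamma G\,\partial_{\nu}V^{(i)}$ survives. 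Since $\nu_k$ is the inward normal, this term enters the formula with sign $-$, giving
\begin{equation*}
\int_\Omega\bigl(-\Delta_\Omega G+bG-\lambda\rho G\bigr)V^{(i)}\,d\mu=-\int_\Gamma g\cdot\partial_\nu V^{(i)}\,d\Gamma.
\end{equation*}
Substituting into $(\tilde f,V^{(i)})=\int\rho fV^{(i)}\,d\mu-\int(-\Delta G+bG-\lambda\rho G)V^{(i)}\,d\mu$ yields exactly \eqref{BookSolvabilityCond}.

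The main obstacle is justifying the lifting and Green's formula at the regularity required. We need $V^{(i)}\in W_2^2(\Omega)$ with $\partial_\nu V^{(i)}|_\Gamma\in W_2^{1/2}(\Gamma)$, and an extension $G$ compatible with $\ell$ near $\partial\Gamma$ when $\Gamma$ is open. I would handle this by localizing $G$ near $\Gamma$ and invoking the elliptic regularity already implicit in $\dmn S\subset W_2^2$. The sign convention for the inward normal must also be tracked carefully.
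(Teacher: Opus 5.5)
Your proposal is correct and follows the same route as the paper, which likewise invokes the Fredholm alternative for the self-adjoint operator $S$ with compact resolvent and obtains \eqref{BookSolvabilityCond} by multiplying by $V^{(i)}$ and integrating by parts on each page $\Omega_k$. Your explicit lifting $z=G+\tilde z$ and your sign check for the inward normal supply details the paper leaves implicit, and the corner-compatibility issue you flag for open $\Gamma$ is a genuine caveat (e.g.\ $g$ must vanish at the endpoints of $\Gamma$ when $\ell$ is of Dirichlet type on the adjacent part of $\Gamma_k$) that the paper's statement also tacitly ignores.
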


\begin{proof}
The statement follows from the Fredholm alternative for the
self-adjoint operator $S$ with compact resolvent.
Conditions \eqref{BookSolvabilityCond} are obtained by multiplying
the equations in \eqref{ProblemBookSolvability} by $V^{(i)}$,
integrating by parts over each page $\Omega_k$, and using the
boundary conditions on $\Gamma_k=\partial\Omega_k\setminus\Gamma$ and $\Gamma$.
\end{proof}

We now characterize those eigenfunctions of $T$ that can be lifted to eigenvectors of $\cA$. To this end, consider the space
\begin{equation*}
\cN_\lambda=\spn\big\{\partial_\nu V^{(1)},\dots,\partial_\nu V^{(n)}\big\}\subset L_2(\Gamma)^K.
\end{equation*}
By \eqref{BookSolvabilityCond},  problems \eqref{ProblemV} are solvable if and only if the trace $u|_\gamma$ belongs to the orthogonal complement of $\cN_\lambda$ in $L_2(\Gamma)^K$. This motivates the definition

\begin{equation*}
\cT_\lambda=\Bigl\{u\in W_2^{3/2}(\Gamma)\otimes\cE_\lambda(H)\colon
u|_\gamma\in\cN_\lambda^\perp\Bigr\}.
\end{equation*}

For every $u\in\cT_\lambda$, the corresponding solution of \eqref{ProblemV} differ by elements of $\cE_\lambda(S)$. We denote by $\cQ_\lambda u$ the unique solution orthogonal to $\cE_\lambda(S)$. This defines a bounded linear operator
\begin{equation}\label{OperatorQ}
\cQ_\lambda\colon\cT_\lambda\to W_2^2(\Omega)\cap\cE_\lambda(S)^\perp.
\end{equation}
Thus, the eigenspace splits into two natural components:
\begin{equation*}
\cE_\lambda(\cA)=\graph\cQ_\lambda
\oplus
\bigl(\{0\}\times\cE_\lambda(S)\bigr).
\end{equation*}
Equivalently,
\begin{equation*}
\cE_\lambda(\cA)=\bigl\{(u,\cQ_\lambda u+v)\colon u\in\mathcal T_\lambda,\; v\in\cE_\lambda(S)\bigr\}.
\end{equation*}

\subsection{Root subspaces in the case $\lambda\in\sigma(T)\cap\sigma(S)$.}
In contrast to the previous cases, the root subspace $\cX_\lambda(\cA)$ need not coincide with the eigenspace. Since \eqref{TUStarU} is solvable only when
$u=0$, every Jordan chain, if it exists, has the form
\begin{equation}\label{Chain}
(0,v)\mapsto (u^*,v^*),
\end{equation}
where $u^*\in\cE_\lambda(T)$. It remains to determine which eigenvectors $(0,v)$ admit such generalized eigenvectors.

Consider an eigenvector $v\in\cE_\lambda(S)$ and represent it with respect to the orthonormal basis \eqref{BasisS}:
\begin{equation}\label{RepreV}
v=\sum_{j=1}^{n}\alpha_jV^{(j)}, \qquad \alpha_i\in\Real.
\end{equation}
To construct a generalized eigenvector $(u^*,v^*)$, we have to solve
\begin{equation*}
 (\mathring S-\lambda I)v^*=v,\qquad   v^*|_\Gamma=u^*|_\gamma.
\end{equation*}
Accordingly, we restrict ourselves to eigenfunctions $u^*\in\cE_\lambda(T)$ whose representation \eqref{UrepreL2} has
coefficients $\beta_j\in W_2^{3/2}(\Gamma)$.
Then $v^*$ is determined from the boundary value problem
\begin{equation}\label{JordanProblem}
-\Delta v^*+bv^*-\lambda\rho v^*=
\rho\sum_{j=1}^{n}\alpha_jV^{(j)}
\quad\text{in }\Omega,\qquad
\ell v^*=0\quad\text{on }\partial\Omega,\qquad
v^*|_\Gamma=u^*|_\gamma.
\end{equation}

Using the orthonormality of the basis \eqref{BasisS}, Lemma~\ref{PropSolvabilityBook} shows the problem is solvable if and only if
\begin{equation*}
\alpha_i=-\int_\Gamma u^*|_\gamma\cdot\partial_\nu V^{(i)}\,d\Gamma, \qquad i=1,\dots,n.
\end{equation*}
For convenience, and in view of the representation \eqref{TraceU}, we introduce the $n\times r$ matrix-valued function $\Phi$ on $\Gamma$ by
\begin{equation*}
\Phi_{ij}(s)=Y^{(j)}\cdot\partial_\nu V^{(i)}(s),
\qquad s\in\Gamma.
\end{equation*}
This leads to the operator $M_\lambda\colon\bigl(W_2^{3/2}(\Gamma)\bigr)^r\to\Comp^n$,
given by
\begin{equation}\label{OperatorM}
M_\lambda\beta=-\int_\Gamma\Phi\beta\,d\Gamma.
\end{equation}
With this notation, the solvability conditions for \eqref{JordanProblem} take the form
\begin{equation}\label{AMB}
\alpha=M_\lambda\beta,
\end{equation}
where $\alpha=(\alpha_1,\dots,\alpha_n)$ and $\beta=(\beta_1,\dots,\beta_r)$. 

Hence, an eigenvector $(0,v)\in\cE_\lambda(\cA)$, where $v$ is represented by \eqref{RepreV}, initiates a Jordan chain \eqref{Chain}  if and only if $\alpha\in\Ran M_\lambda$. If the vectors $\alpha$ and $\beta$ are related by \eqref{AMB}, then
\begin{equation*}
u^*=\sum_{j=1}^{r}\beta_jy^{(j)},
\end{equation*}
while $v^*$ is obtained as the corresponding solution of \eqref{JordanProblem}, $v^*\in\cE_\lambda(S)^\perp$. Thus, every Jordan chain is uniquely determined by a pair
$(\alpha,\beta)$ satisfying \eqref{AMB} with $\alpha\neq0$, and conversely, every such pair generates a Jordan chain.

It remains to show that $\cA$ has no generalized eigenvectors of rank $3$.
Suppose that $(\hat{u},\hat{v})$ is such a vector. Then
\begin{equation*}
(\cA-\lambda I)\begin{pmatrix}\hat{u}\\ \hat{v}\end{pmatrix}
=\begin{pmatrix}u^*\\ v^*\end{pmatrix},
\end{equation*}
where $(u^*,v^*)$ is a generalized eigenvector of rank $2$. In particular, $(T-\lambda I)\hat{u}=u^*$. Since $T$ is self-adjoint and $u^*\in\cE_\lambda(T)$, this equation is solvable only if $u^*=0$. Consequently, $(S-\lambda I)\hat{v}=v^*$. Applying the same argument to the self-adjoint ope\-rator $S$, we conclude that $v=0$, contradicting the fact that $(0,v)$ is an eigenvector of $\cA$.

Let $\cJ_\lambda(\cA)$ denote the linear span of all generalized eigenvectors of rank $2$ corresponding to $\lambda$. Then
\begin{equation*}
\cX_\lambda(\cA)=\cE_\lambda(\cA)\oplus\cJ_\lambda(\cA).
\end{equation*}

The above analysis yields the following characterization of the Jordan structure.
\begin{lem}
Let $\lambda\in\sigma(T)\cap\sigma(S)$. The eigenvalue $\lambda$ has exactly $\rank M_\lambda$ Jordan blocks of length $2$, while all remaining Jordan blocks are of size $1$.
\end{lem}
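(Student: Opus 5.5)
The plan is to turn the "number of Jordan blocks of length $2$" into a dimension count and then evaluate that count with the solvability analysis above. In finite dimensions, the number of Jordan blocks of length at least $2$ equals $\dim\bigl(\cE_\lambda(\cA)\cap\Ran(\cA-\lambda I)\bigr)$. The same holds here because all chains have length at most $2$: fix a complement of $\cE_\lambda(\cA)$ inside $\cX_\lambda(\cA)$; the map $\cA-\lambda I$ sends it injectively onto $\cE_\lambda(\cA)\cap(\cA-\lambda I)\cX_\lambda(\cA)$. Each vector in this image starts a chain of length $2$, and the blocks of length $2$ correspond to a basis of it. So the number of length-$2$ blocks is
\begin{equation*}
d=\dim\bigl(\cE_\lambda(\cA)\cap(\cA-\lambda I)\cX_\lambda(\cA)\bigr),
\end{equation*}
and every other eigenvector (a complement of this image in $\cE_\lambda(\cA)$) forms a block of size $1$. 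It remains to show $d=\rank M_\lambda$.

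First, I identify the image. By the argument following \eqref{Chain}, any $(u^*,v^*)$ with $(\cA-\lambda I)(u^*,v^*)\in\cE_\lambda(\cA)$ must have $(T-\lambda I)u^*=u$ with $u\in\cE_\lambda(T)$. Self-adjointness of $T$ forces $u=0$, so the image lies in $\{0\}\times\cE_\lambda(S)$ and consists of the vectors $(0,v)$. Writing $v$ as in \eqref{RepreV}, such a $(0,v)$ is attained exactly when \eqref{JordanProblem} is solvable for some admissible $u^*=\sum_j\beta_jy^{(j)}$ with $\beta_j\in W_2^{3/2}(\Gamma)$. By Lemma~\ref{PropSolvabilityBook} and orthonormality, this happens exactly when $\alpha=M_\lambda\beta$, i.e.\ \eqref{AMB}. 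The map $\alpha\mapsto\sum_j\alpha_jV^{(j)}$ is an isomorphism $\Comp^n\to\cE_\lambda(S)$, so the image is isomorphic to $\Ran M_\lambda\subset\Comp^n$. Hence $d=\dim\Ran M_\lambda=\rank M_\lambda$, which is finite and at most $n$.

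Second, I check that the count is consistent with the decomposition $\cX_\lambda(\cA)=\cE_\lambda(\cA)\oplus\cJ_\lambda(\cA)$. Choose $\beta^{(1)},\dots,\beta^{(d)}$ whose images $\alpha^{(i)}=M_\lambda\beta^{(i)}$ form a basis of $\Ran M_\lambda$. Let $(u^{*(i)},v^{*(i)})$ be the corresponding rank-$2$ vectors, normalized by $v^{*}\in\cE_\lambda(S)^\perp$. These are linearly independent modulo $\cE_\lambda(\cA)$, since their images under $\cA-\lambda I$ are independent. Any other rank-$2$ vector differs from a combination of them by an element of the kernel $\cE_\lambda(\cA)$. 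So the chains $(0,v^{(i)})\mapsto(u^{*(i)},v^{*(i)})$ give exactly $d$ blocks of length $2$. The eigenvectors in a complement of $\{0\}\times\{\sum\alpha_jV^{(j)}:\alpha\in\Ran M_\lambda\}$ in $\cE_\lambda(\cA)$ have no successors, so they give blocks of size $1$. There are infinitely many of these, because $\graph\cQ_\lambda$ is infinite-dimensional.

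The main obstacle is making "Jordan blocks" meaningful when $\cE_\lambda(\cA)$ is infinite-dimensional. I handle it by defining the block count through the quotient $\cX_\lambda(\cA)/\cE_\lambda(\cA)\cong\Ran M_\lambda$, which the argument above shows is finite-dimensional. A minor point also needs checking: $M_\lambda$ is defined on $W_2^{3/2}$ coefficients, not $L_2$ ones. Since $\Phi$ enters only through a finite-rank functional and $W_2^{3/2}(\Gamma)$ is dense in $L_2(\Gamma)$, the range does not depend on which of these spaces is used.
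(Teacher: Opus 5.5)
Your proposal is correct and follows the paper's own argument. The head of any chain must be of the form $(0,v)$ with $v\in\cE_\lambda(S)$ by self-adjointness of $T$; Lemma~\ref{PropSolvabilityBook} reduces solvability of \eqref{JordanProblem} to $\alpha=M_\lambda\beta$; and the absence of rank-$3$ vectors then gives exactly $\rank M_\lambda$ blocks of length $2$. Your explicit counting via $\dim\bigl(\cX_\lambda(\cA)/\cE_\lambda(\cA)\bigr)$ simply makes precise the formulation the paper records in Theorem~\ref{MainThm}.
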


The  lemma shows that the existence of Jordan chains is determined by the structure of $\cE_\lambda(H)$ and $\cE_\lambda(S)$. More precisely, it depends on the interaction between the boundary values $Y^{(j)}$ of eigenfunctions of $H$ and the normal derivatives of  eigenfunctions of $S$. If these two families do not interact, then $M_\lambda=0$, and consequently the eigenvalue $\lambda$ is semisimple, even if $\lambda\in\sigma(T)\cap\sigma(S)$.

An eigenfunction of $H$ may vanish identically on some edges of the graph $G$, as illustrated by the model example in Section~\ref{Section6}; see also Fig.~\ref{FigExample}. In this case, the corresponding components of $Y^{(j)}$ are zero, so that the support of $y^{(j)}$ determines the set of edge indices corresponding to the nonzero components of $Y^{(j)}$. Since $S=S_1\oplus\cdots\oplus S_K$,
the basis of $\cE_\lambda(S)$ may be chosen so that each basis function is supported on a single page. We may therefore associate each  function $V^{(i)}$ with a unique page~$\Omega_k$. Suppose that the supports of the eigenfunctions $y^{(1)},\dots,y^{(r)}$ are contained in edges whose indices are disjoint from the indices of the pages supporting the basis functions of $\cE_\lambda(S)$. Then $\Phi\equiv0$, and hence $M_\lambda=0$.

We summarize the above results in the following theorem.

\begin{thm}\label{MainThm}
Let $\lambda\in\sigma(\cA)$ be an eigenvalue. Nontrivial Jordan chains can occur only if $\lambda$ is a common eigenvalue of the operators $T$ and $S$. If $\lambda$ is an eigenvalue of either $T$ or $S$, but not both, then $\cX_\lambda(\cA)=\cE_\lambda(\cA)$, that is, the eigenvalue $\lambda$ is semisimple.
More precisely,
\begin{enumerate}
\item if $\lambda\in\sigma(S)\setminus\sigma(T)$, then $\cE_\lambda(\cA)=\{0\}\times\cE_\lambda(S)$ and $\dim\cE_\lambda(\cA)<\infty$;
\item if $\lambda\in\sigma(T)\setminus\sigma(S)$, then $\cE_\lambda(\cA)=\graph\cP_\lambda$ and $\dim\cE_\lambda(\cA)=\infty$,
where $\cP_\lambda\colon W_2^{3/2}(\Gamma)\otimes\cE_\lambda(H)\to W_2^2(\Omega)$ is the operator defined in \eqref{OperatorP}. 
\end{enumerate}

If $\lambda$ is a common eigenvalue of the operators $T$ and $S$, then
\begin{equation*}
\cE_\lambda(\cA)=\graph\cQ_\lambda\oplus\bigl(\{0\}\times\cE_\lambda(S)\bigr)\quad\text{and}\quad\dim\cE_\lambda(\cA)=\infty,
\end{equation*}
where $\cQ_\lambda$ is the operator defined in \eqref{OperatorQ}. Moreover,
\begin{equation*}
\cX_\lambda(\cA)=\cE_\lambda(\cA)\oplus\cJ_\lambda(\cA),
\end{equation*}
where $\cJ_\lambda(\cA)$ is the subspace spanned by the generalized eigenvectors of rank $2$, and 
\begin{equation*}
\dim\bigl(\cX_\lambda(\cA)/\cE_\lambda(\cA)\bigr)=\rank M_\lambda,
\end{equation*}
where $M_\lambda$ is the operator defined by \eqref{OperatorM}. 
\end{thm}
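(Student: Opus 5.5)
The proof amounts to assembling the case analysis of Section~\ref{Section5} and tightening the one point left informal there: the count $\dim(\cX_\lambda(\cA)/\cE_\lambda(\cA))=\rank M_\lambda$. I would split according to whether $\lambda$ lies in $\sigma(S)\setminus\sigma(T)$, in $\sigma(T)\setminus\sigma(S)$, or in $\sigma(T)\cap\sigma(S)$.

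\emph{Case $\lambda\in\sigma(S)\setminus\sigma(T)$.} Since $T-\lambda I$ is invertible, the first component of any eigenvector or rank-two vector vanishes. Hence both $v$ and $v^*$ lie in $\dmn S$. Self-adjointness of $S$ gives $\Ran(S-\lambda I)\perp\cE_\lambda(S)$, so no rank-two vectors exist. Finite dimensionality follows from Lemma~\ref{LemSpectrumS}.

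\emph{Case $\lambda\in\sigma(T)\setminus\sigma(S)$.} I would use $\cE_\lambda(T)=L_2(\Gamma)\otimes\cE_\lambda(H)$ from Lemma~\ref{LemSpectrumT}. By elliptic regularity for the Dirichlet problem on each page, $\beta_j\in W_2^{3/2}(\Gamma)$ is exactly what is needed for a $W_2^2$ solution. Unique solvability, because $\lambda\in\varrho(S)$, gives $\cE_\lambda(\cA)=\graph\cP_\lambda$. Rank-two vectors are excluded because $(T-\lambda I)u^*=u$ forces $u=0$, and then $v=\cP_\lambda 0=0$.

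\emph{Case $\lambda\in\sigma(T)\cap\sigma(S)$.} The eigenspace decomposition follows from Lemma~\ref{PropSolvabilityBook} applied with $f=0$, $g=u|_\gamma$. This yields $\cT_\lambda$; normalizing solutions to be orthogonal to $\cE_\lambda(S)$ gives $\cQ_\lambda$, and the sum is direct since the first components separate the two pieces.

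For infinite dimensionality I would argue as follows. $\cT_\lambda$ is the kernel of the finite-rank map $\beta\mapsto\bigl(\int_\Gamma\sum_j\beta_jY^{(j)}\cdot\partial_\nu V^{(i)}\,d\Gamma\bigr)_{i}$, which takes values in $\Comp^n$. Its domain $W_2^{3/2}(\Gamma)^r$ is infinite-dimensional, so $\cT_\lambda$ has finite codimension and is infinite-dimensional.

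For the root subspace, rank three is excluded by the argument already given. Any rank-two vector has first component $(T-\lambda I)\hat u=0$ applied to an eigenvector with $u=0$, so the chain has the form \eqref{Chain}.

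\emph{The counting step.} This is the step I expect to be the only real obstacle. I would define the linear map
\[
\Theta\colon \cX_\lambda(\cA)\to\cE_\lambda(S),\qquad \Theta w=\text{second component of }(\cA-\lambda I)w.
\]
Its kernel is exactly $\cE_\lambda(\cA)$, since $(\cA-\lambda I)w$ always lies in $\{0\}\times\cE_\lambda(S)$ for $w\in\cX_\lambda(\cA)$. So $\cX_\lambda/\cE_\lambda\cong\Ran\Theta$.

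By the solvability criterion derived above, $\sum_j\alpha_jV^{(j)}\in\Ran\Theta$ if and only if $\alpha=M_\lambda\beta$ for some $\beta\in W_2^{3/2}(\Gamma)^r$. Therefore $\dim\Ran\Theta=\rank M_\lambda\le n$, which is finite.

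Two subtleties need care here. The first is that $\Phi_{ij}$ may be complex-valued, so $\alpha$ ranges over $\Comp^n$; the basis functions $V^{(i)}$ can be taken real, so the pairing in Lemma~\ref{PropSolvabilityBook} has no conjugation issue. The second is that the complement $\cJ_\lambda(\cA)$ can be chosen of dimension $\rank M_\lambda$: pick $\beta$'s whose images under $M_\lambda$ form a basis of $\Ran M_\lambda$, and take the corresponding $(u^*,v^*)$.
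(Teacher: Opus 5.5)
Your proposal is correct and follows the paper's own argument: the case analysis of Section~\ref{Section5}, with solvability via Lemma~\ref{PropSolvabilityBook}, self-adjointness of $T$ and $S$ to rule out rank-two vectors (when $\lambda$ lies in only one spectrum) and rank-three vectors, and the chain condition $\alpha=M_\lambda\beta$. Your quotient map $\Theta$ makes rigorous the count $\dim(\cX_\lambda(\cA)/\cE_\lambda(\cA))=\rank M_\lambda$, which the paper reads off from the correspondence between chains and pairs $(\alpha,\beta)$. You are also right to read $\cJ_\lambda(\cA)$ as a complement of dimension $\rank M_\lambda$ spanned by rank-two vectors, since the literal span of all rank-two vectors would contain $\cE_\lambda(\cA)$.
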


The spectral structure of the limit operator $\cA$ established in this paper provides the foundation for the asymptotic analysis of oscillatory systems on open-book structures with singular density perturbations. In contrast to the perturbed operators, whose spectra consist of isolated eigenvalues of finite multiplicity, the limit operator may possess eigenvalues of infinite multiplicity and nontrivial Jordan chains. As a consequence, the bifurcation of multiple eigenvalues exhibits several qualitatively different scenarios. A principal objective of our subsequent work is to derive asymptotic expansions for the eigenvalues and eigenfunctions of the perturbed problem and to describe the convergence of the corresponding spectral subspaces to the root subspaces of the limit operator.

\section{A model example}\label{Section6}
In this section, we consider an example in which the eigenspace and the Jordan chains corresponding to a common eigenvalue of $S$ and $T$ can be computed explicitly.
We specialize  problem \eqref{LimitEqV}--\eqref{LimitEqU} to an open-book structure consisting of three identical pages, see Fig.~\ref{FigExample}. Each page is identified with the rectangle 
\begin{equation*}
 \Omega_k=\left(0,\tfrac{\pi}{2}\right)\times(0,1), \qquad k=1,2,3,
\end{equation*}
equipped with the Euclidean metric, where $(x_1,x_2)$ are Cartesian coordinates. Thus, $\Delta_{\Omega_k}=\partial_{x_1}^2+\partial_{x_2}^2$. 
The Dirichlet condition is imposed on the binding
\begin{equation*}
\Gamma=\{(0,x_2)\colon 0<x_2<1\}.
\end{equation*}
On the remaining part of the boundary, the operator $\ell$ is subject to the Neumann conditions
\begin{equation*}
\partial_{x_1}v(\tfrac{\pi}{2},x_2)=0,\quad \partial_{x_2}v(x_1,0)=0,\quad \partial_{x_2}v(x_1,1)=0.
\end{equation*}

The potential is assumed to vanish identically, $b=0$, and the mass density is constant, $\rho=1$. 
We also consider the open-book structure $\omega=\Gamma\times G$, where $G$ is the star graph with three edges of unit length. Finally, the weight $q$ is chosen to be constant and equal to $\pi^2/4$.

\begin{figure}[!h]
  \centering
  \includegraphics[scale=0.4]{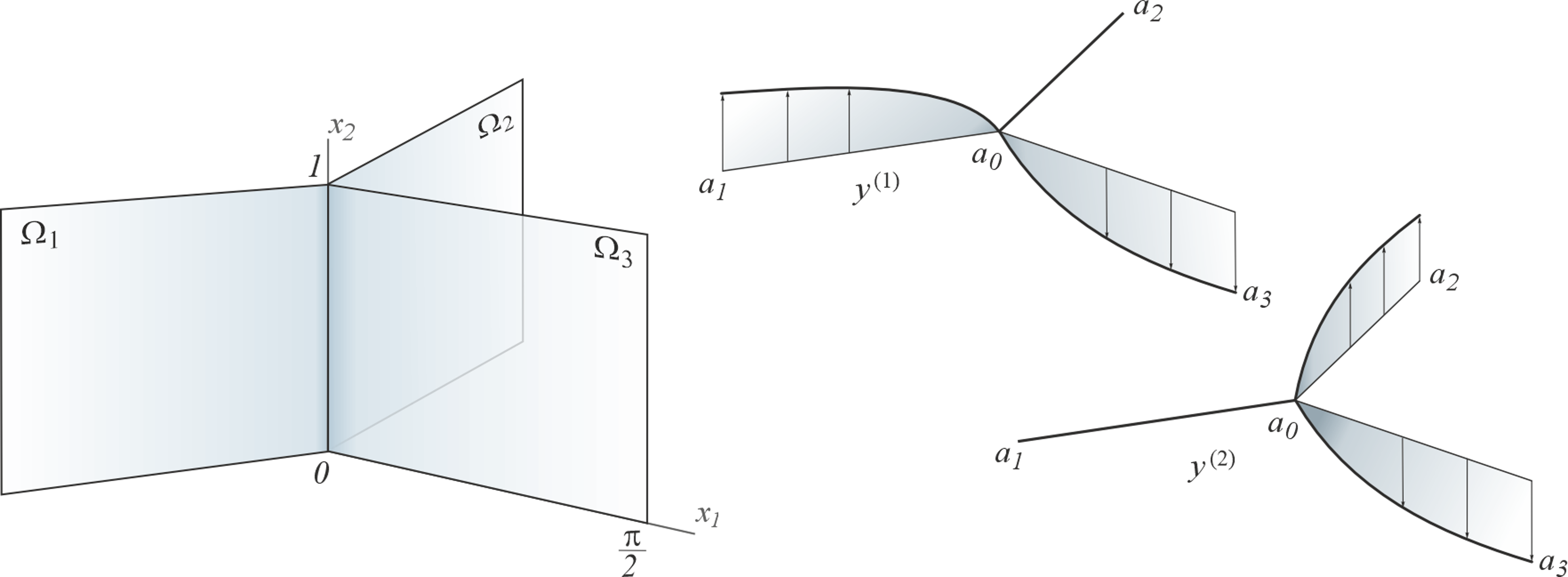}\\
  \caption{The model example. Left: the open-book structure $\Omega$. Right: the
basis eigenfunctions $y^{(1)}$ and $y^{(2)}$ of the graph operator $H$
corresponding to the double eigenvalue $\lambda=1$.}\label{FigExample}
\end{figure}

For the above choice of the open-book geometry, potentials, and mass densities, the number $\lambda=1$ is an eigenvalue of $\cA$. The
corresponding spectral data are as follows (see the proof below).

\noindent
\textit{Basis of $\cE_1(S)$:}
    \begin{equation}\label{BaseV}
     V^{(1)}=\left(\tfrac{2}{\sqrt{\pi}}\sin x_1,0,0\right),\quad
     V^{(2)}=\left(0,\tfrac{2}{\sqrt{\pi}}\sin x_1,0\right),\quad
     V^{(3)}=\left(0,0,\tfrac{2}{\sqrt{\pi}}\sin x_1\right).
    \end{equation}
    
\medskip    
\noindent    
\textit{Basis of $\cE_1(H)$:}
    \begin{equation}\label{BaseY}
        y^{(1)}=\left(\sin\frac{\pi t_1}{2},0,-\sin\frac{\pi t_3}{2}\right),\qquad
        y^{(2)}=\left(0,\sin\frac{\pi t_2}{2},-\sin\frac{\pi t_3}{2}\right),
    \end{equation}
where $t_j\in[0,1]$, see Fig.~\ref{FigExample}. 

\medskip 
\noindent 
\textit{Eigenspace of $T$:} 
\begin{equation*}
  \cE_1(T)=\big\{\beta_1(x_2)y^{(1)}(t)+\beta_2(x_2)y^{(2)}(t)\colon \beta_1, \beta_2\in L_2(0,1) \big\}.
\end{equation*}

\medskip 
\noindent 
\textit{Eigenfunctions of $T$ admitting extension to eigenvectors of $\cA$:}
\begin{equation}\label{spaceTl}
    \cT_1=\left\{\beta_1y^{(1)}+\beta_2y^{(2)}\colon \int_0^1\beta_j\,dx_2=0,\;j=1,2\right\}.
\end{equation}

\medskip    
\noindent    
\textit{Basis of $\cE_1(\cA)$:}
    \begin{equation*}
        (0,V^{(1)}),\quad (0,V^{(2)}),\quad (0,V^{(3)}),\quad (u_j,\cQ_1u_j),\quad j\in\mathbb N,
    \end{equation*}
where $\{u_j\}_{j=1}^\infty$ is an arbitrary basis of $\cT_1$.

\medskip    
\noindent    
\textit{Jordan chains:}
\begin{equation*}
  \left(0,V^{(3)}-V^{(1)}\right)\longmapsto \left(\frac{\sqrt{\pi}}{2}y^{(1)},z^{(1)}\right),\quad
  \left(0,V^{(3)}-V^{(2)}\right)\longmapsto
\left(\frac{\sqrt{\pi}}{2}y^{(2)},z^{(2)}\right),
\end{equation*}
where 
\begin{align*}
&z^{(1)}=\left(
\left(\tfrac{x_1}{\sqrt{\pi}}-\tfrac{\sqrt{\pi}}{2}\right)\cos x_1,\,
0,\,
\left(\tfrac{\sqrt{\pi}}{2}-\tfrac{x_1}{\sqrt{\pi}}\right)\cos x_1
\right),\\
&z^{(2)}=\left(
0,\,
\left(\tfrac{x_1}{\sqrt{\pi}}-\tfrac{\sqrt{\pi}}{2}\right)\cos x_1,\,
\left(\tfrac{\sqrt{\pi}}{2}-\tfrac{x_1}{\sqrt{\pi}}\right)\cos x_1
\right).
\end{align*}

We now explain how the above formulas are obtained. Owing to the Neumann boundary conditions on the outer boundaries of the pages, the relevant
boundary-value problems on $\Omega_k$ reduce to ordinary differential equations on $(0,\pi/2)$ with homogeneous Dirichlet condition at $x_1=0$ and homogeneous Neumann condition at $x_1=\pi/2$. The corresponding problems on the graph are
one-dimensional from the outset. Consequently, all spectral objects can be computed explicitly.

Along the binding $\Gamma$, the normal derivative coincides with the partial derivative with respect to $x_1$. Therefore, at $x_1=0$,
\begin{equation*}
    \partial_\nu V^{(1)}=\left(\tfrac{2}{\sqrt{\pi}},0,0\right),\qquad
    \partial_\nu V^{(2)}=\left(0,\tfrac{2}{\sqrt{\pi}},0\right),\qquad
    \partial_\nu V^{(3)}=\left(0,0,\tfrac{2}{\sqrt{\pi}}\right).
\end{equation*}
Hence, the space $\cN_1$ is three-dimensional. The traces of the basis functions \eqref{BaseY} at the outer vertices of the graph are $Y^{(1)}=(1,0,-1)$ and $Y^{(2)}=(0,1,-1)$. Hence, 
\begin{equation*}
u|_\gamma=\beta_1Y^{(1)}+\beta_2Y^{(2)}=(\beta_1,\beta_2,-\beta_1-\beta_2).
\end{equation*}
Then the condition
$u|_\gamma\in\cN_1^\perp$ is equivalent to
\begin{equation*}
\int_0^1\beta_1\,dx_2=0,\qquad
\int_0^1\beta_2\,dx_2=0,\qquad \int_0^1(\beta_1-\beta_2)\,dx_2=0,
\end{equation*}
which is precisely the characterization of $\cT_1$ in \eqref{spaceTl}.  In this case,
\begin{equation*}
M_1\begin{pmatrix}
     \beta_1 \\
     \beta_2 
   \end{pmatrix}
=\left(
-\frac{2}{\sqrt{\pi}}\int_0^1\beta_1\,dx_2,\,
-\frac{2}{\sqrt{\pi}}\int_0^1\beta_2\,dx_2,\,
\frac{2}{\sqrt{\pi}}\int_0^1(\beta_1+\beta_2)\,dx_2
\right)^t.
\end{equation*}
Choosing $\beta^{(1)}=\left(\tfrac{\sqrt{\pi}}{2},0\right)$ and $\beta^{(2)}=\left(0,\tfrac{\sqrt{\pi}}{2}\right)$, we obtain
\begin{equation*}
M_1\beta^{(1)}=(-1,0,1),\qquad M_1\beta^{(2)}=(0,-1,1).
\end{equation*}
Since these vectors are linearly independent, $\rank M_1=2$. 

Hence, the eigenvalue $\lambda=1$ admits two Jordan chains, starting from the eigenvectors $\left(0,V^{(3)}-V^{(1)}\right)$ and $\left(0,V^{(3)}-V^{(2)}\right)$.
The second components $z^{(i)}$ of the generalized eigenvectors satisfy  problem \eqref{JordanProblem}, which in this case takes the form
\begin{equation*}
\Delta z^{(i)}+z^{(i)}=V^{(3)}-V^{(i)}\quad\text{in }\Omega,\quad
\ell z^{(i)}=0\quad\text{on }\partial\Omega,\quad z^{(i)}|_\Gamma=\tfrac{\sqrt{\pi}}{2}Y^{(i)}.
\end{equation*}
The boundary conditions and the right-hand side suggest seeking solutions independent of $x_2$. Thus, setting $z^{(i)}(x_1,x_2)=z^{(i)}(x_1)$ reduces the problem to an ordinary differential equation. The nonzero components of $z^{(i)}$ on the pages $\Omega_i$ and $\Omega_3$ differ only by sign and satisfy
\begin{equation*}
\frac{d^2 z}{dx_1^2}+z=\frac{2}{\sqrt{\pi}}\sin x_1,\quad
z(0)=-\frac{\sqrt{\pi}}{2},\quad
z'\!\left(\frac{\pi}{2}\right)=0,
\end{equation*}
while the remaining component vanishes identically.

This example demonstrates the construction of the eigenspace of $\cA$ and the corresponding Jordan chains associated with a common eigenvalue of $S$ and $T$.

\bibliographystyle{amsplain}

\end{document}